\documentclass[10pt]{amsart}
\usepackage{amssymb}
\usepackage{amsrefs}
\usepackage[french,english]{babel}
\usepackage[T1]{fontenc}
\usepackage{xcolor}
\usepackage[hypertexnames=false]{hyperref}

\definecolor{linkblue}{RGB}{35,80,130}
\hypersetup{
  colorlinks=true,
  allcolors=linkblue
}

\renewcommand{\eprint}[1]{%
  \href{https://arxiv.org/abs/#1}{\nolinkurl{arXiv:#1}}}
\renewcommand{\PrintDOI}[1]{%
  \href{https://doi.org/#1}{DOI \nolinkurl{#1}}}

\title[Coordinate systems and maximal order complexity]{Coordinate systems and maximal order complexity in Banach lattices}

\author{A.~Avil\'es}
\address{Universidad de Murcia, Departamento de Matem\'aticas, Campus de Espinardo 30100 Murcia, Spain.}
\email{avileslo@um.es}

\author{C.~Rosendal}
\address{Department of Mathematics\\University of Maryland\\4176 Campus Drive\\William E. Kirwan Hall\\College Park, MD 20742-4015\\USA}
\email{rosendal@umd.edu}
\urladdr{https://sites.google.com/view/christian-rosendal/}

\author[M.~A.~Taylor]{M.~A.~Taylor}
\address{Mathematical Institute, University of Oxford, Andrew Wiles Building, Radcliffe Observatory Quarter, Woodstock Road, Oxford, OX2 6GG, United Kingdom}
\email{mitchell.taylor@maths.ox.ac.uk}

\author{P.~Tradacete}
\address{Instituto de Ciencias Matem\'aticas (CSIC-UAM-UC3M-UCM)\\
Consejo Superior de Investigaciones Cient\'{\i}ficas\\
C/ Nicol\'as Cabrera, 13--15, Campus de Cantoblanco UAM\\
28049 Madrid, Spain.}
\email{pedro.tradacete@icmat.es}
\urladdr{https://www.icmat.es/miembros/ptradacete/}

\newcommand{\norm}[1]{\lVert#1\rVert}
\newcommand{\Norm}[1]{\big\lVert#1\big\rVert}
\newcommand{\BNORM}[1]{\Bigg\lVert#1\Bigg\rVert}
\newcommand{\triple}[1]{|\!|\!|#1|\!|\!|}

\newcommand{\Mgd}[2]{\big\{\,{#1}\;\big|\;{#2}\,\big\}}
\newcommand{\conv}[2]{\mathop{\underset{#2}{\overset{#1}\longrightarrow}}}

\newcommand{\N}{\mathbb N}
\newcommand{\R}{\mathbb R}

\newcommand{\eps}{\varepsilon}
\newcommand{\ex}{\exists}
\newcommand{\all}{\forall}
\newcommand{\del}{\;\big|\;}
\newcommand{\equi}{\Leftrightarrow}
\newcommand{\spa}{\mathsf{span}}
\newcommand{\ku}{\mathcal}
\newcommand{\ov}{\overline}
\newcommand{\tom}{\emptyset}

\newcommand{\Tr}{\mathsf{Tr}_{\N}}
\newcommand{\WF}{\mathsf{WF}_{\N}}
\newcommand{\concat}{^{\smallfrown}}
\newcommand{\last}{\mathsf{last}}

\newcommand{\leanstatement}[3]{%
  \href{https://github.com/pedrotradacete/DSTBL3/blob/47f515ee4b565fc55e91754f348d4e14a43f0de4/#1\#L#2}%
  {\ensuremath{\forall}\,\nolinkurl{#3}}}
\newcommand{\leanformalization}[1]{%
  {\normalfont\footnotesize [#1]}}

\theoremstyle{plain}
\newtheorem{thm}{Theorem}[section]
\newtheorem*{theorem*}{Theorem}

\newtheorem{lemme}[thm]{Lemma}
\newtheorem{prop} [thm] {Proposition}
\newtheorem{defi} [thm] {Definition}

\newtheorem{theorem}{Theorem}

\theoremstyle{definition}
\newtheorem{rem}[thm]{Remark}

\makeatletter
\@ifundefined{subjclassname@2020}{%
  \@namedef{subjclassname@2020}{\textup{2020} Mathematics Subject Classification}%
}{}
\makeatother

\thanks{C.~Rosendal~was partially supported by the U.S.~National Science Foundation under Award Numbers DMS-2246986 and DMS-2204849. A.~Avil\'{e}s was supported by grant PID2021-122126NB-C32 funded by MICIU/AEI/10.13039/501100011033 and by ERDF, ``A way of making Europe''. A.~Avil\'{e}s and P.~Tradacete were supported by Fundaci\'{o}n S\'{e}neca--ACyT Regi\'{o}n de Murcia. P.~Tradacete was partially supported by grants PID2024-162214NB-I00 and CEX2023-001347-S funded by MCIN/AEI/10.13039/501100011033.}

\begin{document}
\subjclass[2020]{Primary: 46B42, 46B15, Secondary: 03E15, 54A20}
\keywords{Banach lattices, order bases, Schauder bases, order convergence, complete coanalytic sets}

\begin{abstract}
We resolve three questions arising from the study of coordinate systems and order convergence in Banach lattices. Firstly, every order (respectively, $\sigma$-order) basis $(e_n)$ for a Banach lattice $X=[e_n]$ is automatically a Schauder basis. In fact, the associated partial sum projections satisfy a maximal inequality, and hence every such basis is uniform.

Secondly, we show that there is a separable Banach lattice $X$ in which the set
\[
X_{\downarrow0}=\Mgd{(x_n)\in X^\N}{x_1\geqslant x_2\geqslant\ldots\geqslant 0=\inf_nx_n}
\]
is $\mathbf{\Pi}^1_1$-complete. Finally, we construct a separable Banach lattice $Y$ for which the set $Y^{\uparrow\infty}$ of increasing positive  sequences which are not order bounded is $\mathbf{\Pi}^1_1$-complete. It follows that neither $X_{\downarrow0}$ nor $Y^{\uparrow\infty}$ can be described in any simpler way than by the nonexistence of positive lower and  upper bounds, respectively.
\end{abstract}

\maketitle

\section{Introduction}

This paper settles three questions left open in \cites{ARTT,HOF}.  The first concerns coordinate systems in Banach lattices.  If $C$ denotes norm, weak, uniform, $\sigma$-order or order convergence, a sequence $(e_n)$ in a Banach lattice $X$ is called a $C$-basis when every $x\in X$ admits a unique expansion
\[
x=^C\sum_{n=1}^{\infty} a_ne_n,
\]
where the infinite series is $C$-convergent.
For norm convergence this is the usual notion of a Schauder basis, introduced by Schauder in \cite{schauder}. In Banach spaces, the continuity of the coordinate functionals follows automatically from the classical basis theorem \cite{Banach}*{p.~111}.  For order and $\sigma$-order bases, however, continuity of the associated coordinate functionals is not a priori clear, as the partial sums are only required to converge in the lattice order.

 Assuming analytic determinacy, it was proved in \cite{ARTT} that every $\sigma$-order basis $(e_n)$ for a Banach lattice $X=[e_n]$ is a uniform basis and hence a Schauder basis.  The additional set-theoretic hypothesis entered through definability.  In separable Banach lattices, order and $\sigma$-order convergence of sequences coincide, but the relation $x_n\conv{\sigma\sf o}{n}x$ is, a priori, only $\mathbf{\Delta}^1_2$.  Thus, the Baire category argument used to prove continuity of the coordinate map required projective regularity.  Our first result removes this issue entirely.

\begin{theorem}\label{theoremA}
\leanformalization{
  \leanstatement{TheoremA/Lean_Code/ThmA/MaximalInequality.lean}{244}{theoremA_sigma_order_form},
  \leanstatement{TheoremA/Lean_Code/ThmA/MaximalInequality.lean}{266}{theoremA_order_form},
  \leanstatement{TheoremA/Lean_Code/ThmA/MaximalInequality.lean}{257}{theoremA_sigma_order_schauder}, and
  \leanstatement{TheoremA/Lean_Code/ThmA/MaximalInequality.lean}{276}{theoremA_order_schauder}}
Let $X$ be a Banach lattice and let $(e_n)$ be a sequence in $X$ with $X=[e_n]$. If $(e_n)$ is either an order basis or a $\sigma$-order basis for $X$, then $(e_n)$ is a uniform basis for $X$. More precisely, there is a constant $C<\infty$ such that, for every $N\geqslant1$ and all scalars $a_1,\ldots,a_N$, we have
\begin{equation}\label{eq:maxineq}
\BNORM{\bigvee_{m=1}^N\left|\sum_{k=1}^ma_ke_k\right|}
\leqslant C\Norm{\sum_{k=1}^Na_ke_k}.
\end{equation}
\end{theorem}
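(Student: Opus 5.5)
The plan is to renorm $X$ by a ``maximal'' norm and then apply the open mapping theorem; this avoids any definability or Baire-category argument. For $x\in X$ with expansion $x=\sum_n a_ne_n$ (order or $\sigma$-order convergent), write $S_mx=\sum_{k=1}^m a_ke_k$ and set
\[
\triple{x}=\max\Big(\norm{x},\ \sup_{N}\BNORM{\bigvee_{m=1}^N |S_mx|}\Big).
\]
Uniqueness of expansions makes each $S_m$ linear, so $\triple{\cdot}$ is a seminorm dominating $\norm{\cdot}$, hence a norm.

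\textbf{Finiteness.} An order (or $\sigma$-order) convergent sequence is eventually order bounded. Concretely, $|S_mx-x|\leqslant y_\alpha$ for $m\geqslant m_\alpha$, with $y_\alpha\downarrow0$ a dominating net (a sequence in the $\sigma$-case). Fix one $\alpha$. Then $|S_mx|\leqslant |x|+y_\alpha$ for $m\geqslant m_\alpha$, and the finitely many earlier terms are trivially bounded. Hence $\triple{x}<\infty$.

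\textbf{Coordinates are $\triple{\cdot}$-continuous.} Since $a_ne_n=S_nx-S_{n-1}x$, we get $|a_n|\norm{e_n}\leqslant 2\triple{x}$. Note $e_n\neq0$, since $[e_n]=X$ and expansions are unique.

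\textbf{Completeness of $(X,\triple{\cdot})$.} This is the main step. Let $(x^j)$ be $\triple{\cdot}$-Cauchy; pass to a subsequence with $\triple{x^{j+1}-x^j}<4^{-j}$. Then $x^j\to x$ in norm, and the coordinates converge, $a^j_n\to b_n$. Put $T_m=\sum_{k\leqslant m}b_ke_k$.
\begin{itemize}
\item For each $j$, the increasing sequence $N\mapsto\bigvee_{m\leqslant N}|S_m(x^{j+1}-x^j)|$ has norms bounded by $4^{-j}$. I would rather not rely on its supremum existing, so I instead use lower semicontinuity in a telescoped form: set $w_j=\sum_{i\geqslant j}2^{i}\cdot(\text{finite-$N$ sups})$, arranged so that a norm-convergent series gives an element $z_j\geqslant0$ with $\norm{z_j}\to0$, $z_j$ decreasing, and
\[
|S_mx^{i}-S_mx^{j}|\leqslant z_j\quad\text{for all } m \text{ and all } i\geqslant j.
\]
The simplest route is to replace finite sups by a diagonal choice of $N$ and then take norm limits, using closedness of the positive cone.
\item Letting $i\to\infty$ coordinatewise gives $|T_m-S_mx^j|\leqslant z_j$ for all $m$.
\item Since $x^i-x^j$ is the order limit of $S_m(x^i-x^j)$, we get $|x^i-x^j|\leqslant z_j$. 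Letting $i\to\infty$ in norm, again using closedness of the cone, gives $|x-x^j|\leqslant z_j$.
\end{itemize}
Combining these,
\[
|T_m-x|\leqslant 2z_j+|S_mx^j-x^j|.
\]
The last term is order-null in $m$, and $\inf_j z_j=0$ because $z_j\downarrow$ with $\norm{z_j}\to0$. A standard diagonal argument then shows that $T_m\to x$ in ($\sigma$-)order. By uniqueness, the $b_n$ are the coordinates of $x$. The bound $|S_m(x-x^j)|\leqslant z_j$ then yields $\triple{x-x^j}\to0$. I expect the bookkeeping here (existence of the dominating $z_j$ without Dedekind completeness, and the diagonalisation for nets in the order case) to be the main obstacle.

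\textbf{Conclusion.} The identity $(X,\triple{\cdot})\to(X,\norm{\cdot})$ is a bounded bijection between Banach spaces. By the open mapping theorem, $\triple{x}\leqslant C\norm{x}$. Applying this to $x=\sum_{k\leqslant N}a_ke_k$, where $S_mx=S_Nx$ for $m\geqslant N$, gives \eqref{eq:maxineq}.
\begin{itemize}
\item In particular $\norm{S_mx}\leqslant C\norm{x}$, so $(e_n)$ is a Schauder basis.
\item The tails $\bigvee_{m\geqslant M}|x-S_mx|$ have norm at most $(C+1)\norm{x-S_Mx}\to0$, which is uniform convergence, so $(e_n)$ is a uniform basis.
\end{itemize}
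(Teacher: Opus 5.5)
The gap is in the completeness step, at exactly the point you flag. Write $\Delta_j=x^{j+1}-x^j$. From $\triple{\Delta_j}<4^{-j}$ you only know that $\norm{\bigvee_{m\leqslant N}|S_m\Delta_j|}<4^{-j}$ for each finite $N$. This does not give an element of $X_+$ that dominates $|S_m\Delta_j|$ for \emph{all} $m$ and has small norm. The increasing sequence $N\mapsto\bigvee_{m\leqslant N}|S_m\Delta_j|$ need not be norm Cauchy, so there is nothing to take a norm limit of. A diagonal choice $N_i$ only yields $\sum_{i\geqslant j}\bigvee_{m\leqslant N_i}|S_m\Delta_i|$, which dominates $|S_m\Delta_i|$ only for $m\leqslant N_i$. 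But a bound uniform in $m$ is exactly what you need for $|T_m-S_mx^j|\leqslant z_j$, and hence for $T_m\to x$ in order.

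This is not a bookkeeping issue. In a Banach lattice without a Fatou-type property, norms of finite maxima do not control norms of upper bounds. Take $c$ with the lattice norm $\norm{x}_\infty+|\lim_nx_n|$: the vectors $v_N=\mathbf 1_{\{1,\ldots,N\}}$ increase and have norm $1$, yet every upper bound has norm at least $2$. In $c_0$ the same vectors have no upper bound at all. So you have not shown that $(X,\triple{\cdot})$ is complete. That completeness is true, but only a posteriori, as a consequence of the theorem itself.

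The missing idea, which is how the paper proceeds, is to build the order bounds into the norm: put $\rho(x)=\inf\{\norm u: u\in X_+,\ |S_mx|\leqslant u\text{ for all }m\}$. Your finiteness argument shows $\rho(x)<\infty$. Passing to the order limit gives $\norm x\leqslant\rho(x)$, and clearly $\triple{x}\leqslant\rho(x)$. Now a subsequence with $\sum_r\rho(\Delta_r)<\infty$ comes with $u_r\in X_+$ dominating every $|S_m\Delta_r|$ and satisfying $\sum_r\norm{u_r}<\infty$. Hence $z_R=\sum_{r\geqslant R}u_r$ is a genuine norm-convergent tail with $z_R\downarrow0$ and $|S_m(x^S-x^R)|\leqslant z_R$ for all $m$ and all $S>R$. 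From there your remaining steps (coordinatewise limits, $|x-x^R|\leqslant z_R$, the diagonal argument, uniqueness) go through essentially as you wrote them. For order bases you can avoid diagonalising nets: $X=[e_n]$ is separable, so order and $\sigma$-order convergence of sequences coincide.

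The inverse mapping theorem then gives $\rho\leqslant C\norm{\cdot}$. This is stronger than $\triple{\cdot}\leqslant C\norm{\cdot}$ and yields the maximal inequality as you describe. It also repairs your last bullet, where $\bigvee_{m\geqslant M}|x-S_mx|$ need not exist in $X$. With only the finite maximal inequality, you would additionally have to show that $N\mapsto\bigvee_{M\leqslant m\leqslant N}|S_mx-S_Mx|$ is norm Cauchy, and then take its norm limit as the dominating element. It is norm Cauchy: apply the inequality to $S_{N'}x-S_Nx$ and use $S_Nx\to x$ in norm.
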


The proof is based on a lattice analogue of Mazur's proof that weak bases are Schauder bases.  For a weak basis one completes the argument by applying the inverse mapping theorem to the norm $\triple x=\sup_m\norm{P_mx}$.  Here, $\sigma$-order convergence gives order boundedness of the partial sums, and the corresponding norm is
\[
\rho(x)=\inf\Mgd{\norm u}{u\in X_+\text{ and } |P_mx|\leqslant u\text{ for all }m}.
\]
The main point is to prove that $(X,\rho)$ is complete.  The inverse mapping theorem then yields $\rho\leqslant C\norm{\cdot}$, which implies the above maximal inequality.  Thus, automatic continuity is obtained in ZFC without lowering the descriptive complexity of $\sigma$-order convergence.

The remaining questions are descriptive-set-theoretic in nature.  In \cite{HOF}, the complexity of $\sigma$-order convergence was analyzed through
\[
X_{\downarrow0}=\Mgd{(x_n)\in X^\N}{x_1\geqslant x_2\geqslant\ldots\geqslant 0=\inf_nx_n}.
\]
For every separable Banach lattice $X$, this set is coanalytic.  Moreover, in \cite{HOF} a Fatou hierarchy  was introduced so that $X_{\downarrow0}$ is Borel exactly when $X$ is $\alpha$-Fatou for some countable ordinal $\alpha$. It was further shown that $X_{\downarrow0}$ is Borel precisely when $\sigma$-order convergence is analytic.  Spaces of arbitrarily high countable Fatou rank were constructed in \cite{HOF}, but it remained open whether a single separable Banach lattice could achieve the coanalytic upper bound. Our next theorem answers this question affirmatively.

\begin{theorem}\label{theoremB}
\leanformalization{\leanstatement{ThmBC/Lower/Complete.lean}{441}{lower_complete_template}}
There is a separable Banach lattice $X$ such that $X_{\downarrow0}$ is $\mathbf{\Pi}^1_1$-complete.
\end{theorem}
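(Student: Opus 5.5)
Since $X_{\downarrow0}$ is coanalytic in every separable Banach lattice, only $\mathbf{\Pi}^1_1$-hardness is needed. I will give a continuous (indeed Lipschitz) reduction from the set $\WF$ of well-founded trees on $\N$, which is a standard $\mathbf{\Pi}^1_1$-complete subset of $\Tr$. The idea is to build one separable lattice that glues together the obstructions from \cite{HOF}, where spaces of arbitrarily high countable Fatou rank were constructed, and to index the gluing by trees. Concretely, I take the countable set $\N^{<\N}$ of finite sequences. Each node $s$ gets a positive vector $e_s$. The lattice $X$ is the closure, inside a suitable function lattice (say $C(K)$ or a sublattice of $\ell_\infty$ over a compact set of branches/levels), of the vector sublattice generated by the $e_s$. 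The functions are chosen so that a descending sequence $x_n\downarrow$ built from a tree $T$ has a nonzero positive lower bound exactly when $T$ has an infinite branch.

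The reduction is as follows. Fix an enumeration $(s_k)$ of $\N^{<\N}$. Given $T\in\Tr$, I set
\[
x_n(T)=\bigvee\Mgd{2^{-|s|}e_s}{s\in T,\ s=s_k \text{ for some } k\geqslant n}
\]
or a closely related decreasing tail expression, normalized so the map $T\mapsto(x_n(T))$ is continuous into $X^\N$. Each $x_n$ depends on only finitely many nodes up to small norm error, and the weights force norm convergence of the suprema. Monotonicity $x_1\geqslant x_2\geqslant\cdots\geqslant0$ then holds automatically. I must verify two directions. First, if $T$ has a branch $\beta$, the lattice must contain a nonzero $u$ with $0<u\leqslant e_{\beta|m}$ for all $m$ (after weighting), hence $u\leqslant x_n$ for all $n$. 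This suggests designing $e_s$ as functions on the compact space $K=\N^{\N}\cup\N^{<\N}$ compactified, or on $[0,1]$ via a Cantor-like coding. Then $e_s$ is supported on a neighbourhood of the cone above $s$, and the lattice includes functions concentrated near any branch $\beta$, which gives a lower bound along a branch. Second, if $T$ is well founded, I must show $\inf_n x_n=0$. Any $0\leqslant u\leqslant x_n$ for all $n$ is pointwise bounded by $\limsup$ of the tails. On a well-founded tree this vanishes everywhere except at points "at infinity" that are not branches of $T$, and I need these to be invisible to $X$.

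The main obstacle is the second direction. Pointwise vanishing off a meagre or small set does not give $u=0$ in $C(K)$-type lattices, since $X_{\downarrow0}$ is about the lattice infimum and not the pointwise one. This is exactly where the Fatou hierarchy of \cite{HOF} lives. Infima in a well-founded tree of rank $\alpha$ are computed by an $\alpha$-step transfinite derivation, and the construction must make every such derivation terminate at $0$. My approach is to prove, by induction on the rank of $T$ (and of subtrees $T_s$), that any lattice lower bound $u$ of the restriction of $(x_n)$ to the cone above $s$ satisfies $u\wedge e_s=0$. The base case is trivial for leaves. For the inductive step, $u\wedge e_s$ is dominated by $\bigvee_i$ of lower bounds for the children plus tail terms of norm $\to0$. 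Here the choice of $X$ must ensure that a positive element dominated by a sequence of disjoint-ish pieces each equal to $0$ is $0$, a kind of local $\sigma$-Fatou property at each node. At the same time, the branch points along genuine infinite branches must remain detectable. To arrange both, I would first try to realise $e_s$ as continuous functions on the Cantor space $2^{\N^{<\N}}$-coded compactum, with $X$ the closed sublattice they generate, and check the induction directly.
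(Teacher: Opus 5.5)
Your outer frame matches the paper: $X_{\downarrow0}$ is coanalytic, so it suffices to reduce $\WF$ continuously to it. An infinite branch should produce a nonzero lower bound, and well-foundedness should be handled by induction on rank. But the proposal never constructs $X$, and the step you identify as the main obstacle is the entire content of the theorem. As it stands this is a plan, not a proof.

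Moreover, the reduction you write down would not work.
\begin{enumerate}
\item There are infinitely many nodes of each length, so the weights $2^{-|s|}$ do not tend to $0$ along the enumeration. Hence $\bigvee_{k\geqslant n}2^{-|s_k|}e_{s_k}$ need not converge in norm, and $T\mapsto x_n(T)$ need not be continuous: membership of a single node $(j)$ with large index can change $x_n$ by a fixed amount.
\item More seriously, your sequence decreases by discarding nodes in enumeration order, which has nothing to do with branches. Suppose $\norm{e_s}\leqslant1$ and $e_s$ lives near the cone above $s$. Then at a branch point $\beta$ only the terms $2^{-m}e_{\beta|m}$ survive, so $\inf_nx_n(\beta)=0$ and the branch yields no lower bound.
\item In $C(K)$ itself, with $K$ compact metrizable, $\inf_nx_n=0$ holds iff $\bigcup_n\{x_n<\eps\}$ is dense for every rational $\eps>0$, which is a Borel condition. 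So all the complexity must come from a carefully chosen proper sublattice, and that is exactly what you leave unspecified.
\end{enumerate}

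The ideas you are missing are these.
\begin{enumerate}
\item In the paper, $z^R_m$ does not decrease along the tree at all. It is constantly $e$ on $R=\widehat T$, and a copy of one fixed sequence $d_m\downarrow0$ is pasted into the subtree at every exit from $R$.
\item The lattice is $X=Z_{L_1[0,1]}$: the closure of the finite-rank elements in a weighted sup-norm space over $\N^{<\N}$ in which all children of a node carry the same weight. This forces $x_{s\concat n}\to x_s$ for every $x\in X$, which is the node-wise property you were looking for. It lets the vanishing of a lower bound on each exit subtree propagate back to the root by well-founded induction.
\item This same property rules out any nonzero element supported on a single branch. That is exactly where your witness $u\leqslant e_{\beta|m}$ would clash with the well-founded direction. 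The paper resolves the tension with the fattening $\widehat T$. For a branch $a$ of $T$, take the element equal to $e$ on the domination cone $L_a=\{(n_1,\ldots,n_k): a_i\leqslant n_i\}$ and $0$ elsewhere.
\begin{itemize}
\item It is eventually constant along children, hence lies in $X$.
\item It lies below every $z^{\widehat T}_m$, because $L_a\subseteq\widehat T$.
\item Meanwhile $\widehat T$ is still well-founded when $T$ is.
\end{itemize}
\end{enumerate}
Without some such pair (a node-wise continuity property of $X$, and a way to make branch witnesses compatible with it), neither direction of your reduction is established.
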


There is a parallel question for increasing sequences. Put
\[
X^{\uparrow\infty}=\Mgd{(x_n)\in X^\N}{0\leqslant x_1\leqslant x_2\leqslant\ldots\text{ and $(x_n)$ is not order bounded}}.
\]
This set is again coanalytic in every separable Banach lattice, while the criteria in \cite{HOF} give many natural hypotheses under which it is Borel.  Our final result shows that the trivial upper bound on the complexity of $X^{\uparrow\infty}$ is, nevertheless, sharp.

\begin{theorem}\label{theoremC}
\leanformalization{\leanstatement{ThmBC/Upper/Upper.lean}{350}{upper_complete_template}}
There is a separable Banach lattice $Y$ for which $Y^{\uparrow\infty}$ is $\mathbf{\Pi}^1_1$-complete.
\end{theorem}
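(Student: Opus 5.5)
We will reduce the $\mathbf{\Pi}^1_1$-complete set $\WF$ of well-founded trees on $\N$ continuously to $Y^{\uparrow\infty}$. Upper bound: $Y^{\uparrow\infty}$ is coanalytic in any separable Banach lattice, since $(x_n)$ is order bounded iff there is $u$ in $Y$ with $x_n\leqslant u$ for all $n$. This is an existential quantifier over the Polish space $Y$ applied to a closed condition, so the set of order bounded increasing sequences is analytic. The work is in hardness. The natural guide is the fact that, in $\ell_\infty$-type or $C(K)$-type lattices, order boundedness of $(x_n)$ reflects the existence of a ``branch'' along which the $x_n$ stay controlled. We want to arrange the opposite: an increasing sequence built from a tree $T$ should be order bounded exactly when $T$ has an infinite branch.

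Concretely, we will take $Y$ to be a sublattice of functions on the countable set $\Tr$ of finite sequences, or on its completion, something like $c_0$ or $C(K)$ twisted along branches. The idea is to let $Y$ be the closed sublattice of $\ell_\infty(\N^{<\N})$ generated by the indicators $1_{\{s\}}$ together with the functions $b_\sigma=\sum_{n}1_{\{\sigma|n\}}$ for branches $\sigma\in\N^\N$. The generated lattice must remain separable, so instead of all $b_\sigma$ we use a norm in which the branch functions appear as limits. The more promising option is to define $Y$ as the completion of $c_{00}(\N^{<\N})$ under the norm
\[
\norm{f}=\sup_{\sigma\in\N^\N}\sup_n \Big|\sum_{k\leqslant n} w_k f(\sigma|k)\Big|
\]
or a lattice version of it, $\sup_\sigma\sum_k|f(\sigma|k)|2^{-k}$, with a Fatou-failing twist. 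In such a lattice, order bounds for sequences supported on a tree $T$ can be ``paid for'' only through genuine infinite branches. Given $T$, we set $x_n=\sum_{s\in T,\ |s|\leqslant n,\ s\text{ among first }n} 1_{\{s\}}$, scaled appropriately. This is increasing, positive, and depends continuously on $T$, because each $x_n$ depends on finitely many bits of $T$.

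The two directions to verify are as follows. (i) If $T$ is ill-founded with branch $\sigma$, we must exhibit an upper bound; this requires the lattice to contain an element dominating the $1_{\{s\}}$ near $\sigma$, which conflicts with $c_0$-ness. So the reduction should be reversed: map $T$ to a sequence that is bounded iff $T$ is ill-founded, and $\mathbf{\Sigma}^1_1$-hardness of the bounded sequences then gives $\mathbf{\Pi}^1_1$-hardness of $Y^{\uparrow\infty}$. (ii) If $T$ is well-founded, every candidate bound $u\in Y$ must fail, using a rank argument on $T$ and the requirement that $u$ be a norm limit of finitely supported elements along finite paths.

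The main obstacle is designing the norm so that both directions hold simultaneously while $Y$ stays separable: branches of ill-founded trees must yield genuine elements of $Y$, for instance via an embedding of $C(\N^\N)$-like pieces or of the Cantor set of branches, and no element may bound a well-founded tree of large rank. I would first try $Y=C(K)$ for a suitable compact metric $K$ built from $\N^{<\N}\cup\N^\N$ with weights, and verify the well-founded case by induction on rank.
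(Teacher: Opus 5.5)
\textbf{There is a genuine gap: the hardness half is never carried out.} Your coanalytic upper bound is correct and is the paper's argument. But you never fix a lattice $Y$ or a reduction $T\mapsto(x_n)$, and you end by calling the design of the norm ``the main obstacle''. That design is precisely the content of the theorem.

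\textbf{Your one concrete candidate cannot work.} Take the completion of $c_{00}(\N^{<\N})$ under $\sup_\sigma\sum_k|f(\sigma|k)|2^{-k}$. In it, point evaluations are continuous and every element $u$ is the norm limit of its restrictions $u\mathbf{1}_F$ to finite sets $F$, since $\norm{u-u\mathbf{1}_F}\leqslant\norm{u-v}$ when $F$ is the support of $v\in c_{00}$. Hence the lattice is solid in $\R^{\N^{<\N}}$. An increasing sequence of scaled indicators of finite pieces of $T$ is therefore order bounded if and only if its pointwise supremum lies in the space. That is a Borel condition on $T$, so it cannot coincide with ill-foundedness, whatever scalings you choose. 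For instance, with unit scalings and $T=\N^{<\N}$, any $u\geqslant\mathbf{1}_T$ is at distance at least $1/2$ from every finitely supported function: look at a length-one node outside the support. So the sequence is unbounded even though $T$ is ill-founded.

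Your ``reversal'' in (i) also changes nothing. A sequence that is bounded if and only if $T$ is ill-founded is exactly the reduction you set out to build. The obstacle you identified, producing an actual bound in $Y$ when $T$ has a branch, is still unresolved.

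\textbf{What is missing} is a non-solid mechanism that pushes lower bounds from children up to parents, together with a way for a branch to let a bound escape it. The paper does this as follows.
\begin{enumerate}
\item It takes $Y=\{y\in Z_\R : y_\tom=0\}$, where $Z_\R$ is the closure of the elements that are eventually constant along children, under the norm $\sup_s\omega(s)|x_s|$. All children of a node have the same weight, so every $y\in Z_\R$ satisfies $y_{s\concat n}\to y_s$ (Lemma~\ref{lem:child-convergence-template}).
\item The sequence $(a_m)$, equal to $1$ on the first $m$ level-one subtrees, is then unbounded in $Y$. A bound would be $\geqslant1$ at every level-one node, hence at the root.
\item Pasting $(a_m)$ at every exit from the fattened tree $\widehat T$ gives $u_m^{\widehat T}$.
\item If $T$ is well-founded, induction on $\widehat T$ using child convergence forces $y_\tom\geqslant1$ for any bound $y$, contradicting $y_\tom=0$.
\item If $T$ has a branch $b$, the cone $L_b\subseteq\widehat T$ contains all sufficiently large children of each of its nodes. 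So the indicator of $\N^{<\N}\setminus L_b$ is eventually constant along children, lies in $Y$ by Lemma~\ref{lem:eventually-constant-template}, and dominates every $u_m^{\widehat T}$.
\end{enumerate}
Thus the branch is where the bound is allowed to vanish, which is what lets it meet the root constraint. The fattening makes the branch region wide enough for this, and the decay of $\omega$ gives continuity of $T\mapsto u_m^{\widehat T}$.
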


The proofs of Theorems~B and C  amalgamate the examples in \cite{HOF} using an explicit weighted tree.  A tree is sent to a sequence obtained by appropriately attaching to the tree  either a decreasing sequence with infimum zero or an increasing sequence that is not order bounded.  Well-foundedness and ill-foundedness are then read off from the absence or presence of the relevant order bounds.  Consequently, the natural reductions from well-founded trees witness complete coanalyticity.

The above results illustrate once again the difference between general order convergent sequences and those witnessing the convergence of a basis expansion. See \cites{gumenchuk,taylor} for the first examples of this kind and \cites{berna2026maximal,yu2026refined} for further developments. 

The paper is organized as follows. Section~\ref{sec:prelim} recalls the convergence and basis notions and records the elementary descriptive-set-theoretic facts. Section~\ref{sec:bases} proves Theorem~A. Section~\ref{sec:tree-template} develops the common weighted tree construction. Sections~\ref{sec:lower-complete} and~\ref{sec:upper-complete} prove Theorems~B and C, respectively.

\subsection{Lean formalization}

The results in this paper have been formalized in Lean~4 using Mathlib and the BanLat library. Every proposition, theorem, and lemma below includes a link, marked by $\forall$, to the corresponding declaration at a fixed revision of \href{https://github.com/pedrotradacete/DSTBL3}{\texttt{pedrotradacete/DSTBL3}}. When a paper statement is represented by several declarations, all of them are listed. For Theorem~A, the links cover the maximal inequality and the resulting Schauder basis statements; the final uniform basis implication uses the external theorem cited in its proof.

%%%%%%%%%%%%%%%%%%%%%%%%%%%%%%%%%%%%%%%%%%%%
%%%%%%%%%%%%%%%%%%%%%%%%%%%%%%%%%%%%%%%%%%%%
%%%%%%%%%%%%%%%%%%%%%%%%%%%%%%%%%%%%%%%%%%%%
%%%%%%%%%%%%%%%%%%%%%%%%%%%%%%%%%%%%%%%%%%%%
%%%%%%%%%%%%%%%%%%%%%%%%%%%%%%%%%%%%%%%%%%%%
%%%%%%%%%%%%%%%%%%%%%%%%%%%%%%%%%%%%%%%%%%%%
%%%%%%%%%%%%%%%%%%%%%%%%%%%%%%%%%%%%%%%%%%%%
%%%%%%%%%%%%%%%%%%%%%%%%%%%%%%%%%%%%%%%%%%%%
%%%%%%%%%%%%%%%%%%%%%%%%%%%%%%%%%%%%%%%%%%%%
%%%%%%%%%%%%%%%%%%%%%%%%%%%%%%%%%%%%%%%%%%%%
%%%%%%%%%%%%%%%%%%%%%%%%%%%%%%%%%%%%%%%%%%%%
\section{LLM Usage}

Large Language models played a significant role in the development of this work. With the suggestion that an analog of the proof of Mazur's weak basis theorem may be useful for proving automatic continuity (which was suspected by C.~Rosendal but had not been worked out in any detail) GPT 5.5 largely produced the above proof that every order basis $(e_n)$ for a Banach lattice $X=[e_n]$ is automatically a Schauder basis. Regarding the complexity questions, although we were largely convinced that the problems we left open in \cite{ARTT} were within reach, we had already spent a significant amount of time coming up with the ideas in \cite{ARTT}, so it was natural to leave them for the future. To obtain the results in this paper, we used GPT 5.5  to provide the technical construction based on the ideas in  \cite{ARTT}, completing the theory in \cite{ARTT}. The fact that GPT 5.5  was able to do this is perhaps not so surprising, as there is a massive literature on both descriptive set theory and Banach lattices, but there are less than half a dozen papers in their intersection. However, the speed at which it was able to upgrade our construction to its rightful generality was impressive.

We also acknowledge that Claude Opus 4.8 and GPT 5.6 Sol were used to assist in the Lean verification. The statements in the corresponding code were carefully reviewed by the authors.

\section{Preliminaries}\label{sec:prelim}

All Banach lattices are real. Throughout, $\N=\{1,2,\ldots\}$; in particular, $\N^{<\N}$ denotes the set of finite strings of positive integers, together with the empty string $\tom$. We use the notation and conventions of \cites{ARTT,HOF}. If $z_m\in X_+$ is decreasing and has infimum $0$, we write $z_m\downarrow0$. Also, $\all^\infty n$ means ``for all but finitely many $n$''. A sequence $(x_n)$ in $X$ is said to $\sigma$-order converge to $x$, written
\[
x_n\conv{\sigma\sf o}{n}x,
\]
if there is a sequence $z_m\downarrow0$ in $X_+$ such that
\[
\all m\;\all^\infty n\quad |x_n-x|\leqslant z_m.
\]
It order converges to $x$, written $x_n\conv{\sf o}{n}x$, if the same assertion holds with a decreasing net $z_\mu\downarrow0$ in place of the sequence $(z_m)$. Uniform convergence, denoted $x_n\conv{\sf u}{n}x$, means that there is a $z\in X_+$ such that
\[
\all m\;\all^\infty n\quad |x_n-x|\leqslant \frac{z}{m}.
\]

If $C$ is one of these convergences, then a sequence $(e_n)$ in $X$ is called a $C$-basis for $X$ if every $x\in X$ has a unique scalar expansion
\[
x=^C\sum_{n=1}^{\infty}a_ne_n.
\]
In this case, the corresponding coordinate functionals are denoted by $e_n^\sharp$, and the partial sum maps by
\[
P_mx=\sum_{n=1}^m e_n^\sharp(x)e_n.
\]

We reserve the notation $e_n^*$ for continuous coordinate functionals.

\begin{rem}
The terminology follows Gumenchuk, Karlova and Popov \cite{gumenchuk}, except that their order bases are here called $\sigma$-order bases. See also \cites{taylor} for further information on bases in Banach lattices. %, or the classical Riesz-space usage of order basis for a band-generating set \cite{LZ}*{Definition~28.1}. Their Problem~1.3 is answered by Theorem~A, without the order-continuity assumption. Related bibasic and Haar-system questions appear in \cites{taylor,popov}.
\end{rem}

We shall use the following reduction from order to $\sigma$-order convergence. It is proved in \cite{ARTT}, but we recall the short proof for convenience.

\begin{prop}\label{prop:order-sigma}
\leanformalization{\leanstatement{TheoremA/Lean_Code/ThmA/OrderSigma.lean}{119}{orderConvergesTo_iff_sigmaOrderTendsto}}
Let $X$ be a separable Banach lattice. Then a sequence in $X$ order converges to $x\in X$ if and only if it $\sigma$-order converges to $x$.
\end{prop}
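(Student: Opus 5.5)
One direction is immediate: a decreasing sequence $z_m\downarrow0$ is in particular a decreasing net with infimum $0$. So $\sigma$-order convergence implies order convergence, and the work is entirely in the converse.

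For the converse, suppose $x_n\conv{\sf o}{n}x$, witnessed by a decreasing net $(z_\mu)_{\mu\in M}$ with $z_\mu\downarrow0$ and, for each $\mu$, $|x_n-x|\leqslant z_\mu$ for all but finitely many $n$. The plan is to extract a countable subfamily of the net whose infimum is still $0$, and then make it decreasing by taking finite infima.

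The key step uses separability of $X$. In a separable Banach lattice, every downward directed set $D\subseteq X_+$ with $\inf D=0$ contains a countable subset with infimum $0$; that is, $X$ has the countable sup property.

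To prove this, I would use the following argument. The set $D$ is a subset of the separable metric space $X$, so it has a countable norm-dense subset $\{d_k\}$. I claim $\inf_k d_k=0$. Let $y$ be a lower bound for all the $d_k$. Fix $d\in D$ and choose $d_{k_j}\to d$ in norm. Then $(y-d_{k_j})^+=0$, and since the lattice operations are norm continuous, $(y-d)^+=0$. Hence $y\leqslant d$ for every $d\in D$, so $y\leqslant\inf D=0$. Therefore $\inf_k d_k=0$. In short, norm-closedness of the positive cone handles everything.

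Now apply this to $D=\{z_\mu\}$, which is downward directed because the net is decreasing. This gives indices $\mu_1,\mu_2,\ldots$ with $\inf_k z_{\mu_k}=0$. Put $w_m=z_{\mu_1}\wedge\cdots\wedge z_{\mu_m}$. Then $w_m$ is decreasing, and $0\leqslant w_m\leqslant z_{\mu_m}$, so $w_m\downarrow0$.

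It remains to check the tail condition for each $m$. For each $k\leqslant m$ there is $N_k$ such that $|x_n-x|\leqslant z_{\mu_k}$ for all $n\geqslant N_k$. Take $n\geqslant\max_{k\leqslant m}N_k$. Then $|x_n-x|$ is below each $z_{\mu_k}$ with $k\leqslant m$, hence below their infimum $w_m$. This is exactly $\sigma$-order convergence with witness $(w_m)$.

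The only nonroutine step is the countable sup property. The density argument above disposes of it, so I expect no serious obstacle. The one point needing care is that the argument uses only directedness and $\inf D=0$; in particular, no order continuity of the norm is assumed.
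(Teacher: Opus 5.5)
Your proposal is correct and is essentially the paper's argument: take a countable norm-dense subset of a family of positive eventual dominators with infimum $0$, use norm-closedness of the order to see that this countable subset still has infimum $0$, and pass to finite meets. The only cosmetic difference is that the paper works with the meet-closed set $A$ of all eventual dominators of $|x_n-x|$, so the finite meets lie in $A$ automatically, whereas you use the range of the net and check the tail condition directly via $\max_{k\leqslant m}N_k$.
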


\begin{proof}
Only one implication needs proof. Suppose $x_n\conv{\sf o}{n}x$ and put
\[
A=\Mgd{z\in X_+}{\all^\infty n\; |x_n-x|\leqslant z}.
\]
Then $A$ is closed under finite meets and $\inf A=0$. Since $X$ is separable, we may choose a countable subset $D=\{d_1,d_2,\ldots\}$ that is dense in $A$ with its inherited norm topology, and set
\[
z_m=d_1\wedge\cdots\wedge d_m.
\]
Then $z_m\in A$ and $(z_m)$ is decreasing. If $0\leqslant h\leqslant z_m$ for every $m$, then $h\leqslant d_i$ for every $i$. By density of $D$ in $A$ and the fact that the order relation is norm closed, we have $h\leqslant z$ for every $z\in A$. Since $\inf A = 0$, this forces $h=0$. Thus, $z_m\downarrow0$ and the sequence $(z_m)$ witnesses the convergence $x_n\conv{\sigma\sf o}{n}x$.
\end{proof}

For a Banach lattice $X$, we define
\[
X_\downarrow=\Mgd{(x_n)\in X^\N}{x_1\geqslant x_2\geqslant\ldots\geqslant0}
\]
and
\[
X_{\downarrow0}=\Mgd{(x_n)\in X_\downarrow}{\inf_nx_n=0}.
\]
Similarly, we define
\[
X^\uparrow=\Mgd{(x_n)\in X^\N}{0\leqslant x_1\leqslant x_2\leqslant\ldots}
\]
and
\[
X^{\uparrow\infty}=\Mgd{(x_n)\in X^\uparrow}{(x_n)\text{ is not order bounded in }X}.
\]
Thus, $(x_n)\in X^{\uparrow\infty}$ if it is an increasing positive sequence and there is no $y\in X$ satisfying $x_n\leqslant y$ for all $n$.

\begin{prop}\label{prop:coanalytic}
\leanformalization{\leanstatement{ThmBC/Lower/Complete.lean}{250}{xDownarrowZeroZ_coanalytic} and
  \leanstatement{ThmBC/Upper/Upper.lean}{255}{yUpwardUnboundedY_coanalytic}}
For every separable Banach lattice $X$, both $X_{\downarrow0}$ and $X^{\uparrow\infty}$ are coanalytic subsets of $X^\N$.
\end{prop}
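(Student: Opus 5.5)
We show that each complement is analytic by writing it as a projection of a Borel (in fact closed or $G_\delta$) subset of a Polish space. Throughout, $X^\N$ carries the product topology, which is Polish since $X$ is separable. The two order relations we use, $\{(u,v)\in X^2: u\leqslant v\}$ and the positive cone $X_+$, are norm closed. The lattice operations are continuous.

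\emph{The increasing case.} First, $X^\uparrow$ is closed in $X^\N$, being a countable intersection of the closed sets $\{0\leqslant x_1\}$ and $\{x_n\leqslant x_{n+1}\}$. Its complement inside $X^\uparrow$ consists of the order bounded sequences, and this set is
\[
\Mgd{(x_n)\in X^\uparrow}{\ex y\in X\;\all n\; x_n\leqslant y}.
\]
This is the projection onto $X^\N$ of the closed set
\[
F=\Mgd{((x_n),y)\in X^\N\times X}{(x_n)\in X^\uparrow,\ \all n\; x_n\leqslant y},
\]
and is therefore analytic. Hence
\[
X^{\uparrow\infty}=X^\uparrow\setminus \pi[F]
\]
is the intersection of a closed set with a coanalytic set, so it is coanalytic.

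\emph{The decreasing case.} Similarly, $X_\downarrow$ is closed. For a sequence $(x_n)\in X_\downarrow$, the failure of $\inf_n x_n=0$ means that there is some $h\in X$ with $h>0$ and $h\leqslant x_n$ for all $n$. (Any lower bound $h$ of the $x_n$ can be replaced by $h\vee 0$, since each $x_n\geqslant0$.) Thus $X_\downarrow\setminus X_{\downarrow0}$ is the projection of
\[
G=\Mgd{((x_n),h)\in X^\N\times X}{(x_n)\in X_\downarrow,\ h\in X_+,\ h\neq0,\ \all n\; h\leqslant x_n}.
\]
The set $G$ is the intersection of closed sets with the open set $\{h\neq0\}$, so it is Borel. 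Its projection is analytic, and therefore $X_{\downarrow0}=X_\downarrow\setminus\pi[G]$ is coanalytic.

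There is no real obstacle here. The only care needed is to verify that the relevant order relations are closed, and to check that quantifying over a single witness in $X$ correctly captures "order bounded" and "infimum not zero". Note that "$\inf_n x_n=0$" requires $0$ to be the greatest lower bound. Since $0$ is already a lower bound of $(x_n)$, this fails exactly when there is a lower bound not below $0$. Such a lower bound $h$ gives a lower bound $h\vee0$, which is a nonzero positive lower bound because $h\vee0=0$ would force $h\leqslant0$. Conversely, any nonzero positive lower bound is a lower bound not below $0$. This confirms the description of $G$.
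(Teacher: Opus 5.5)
Your proposal is correct and follows essentially the same route as the paper: $X_\downarrow$ and $X^\uparrow$ are closed, and each complement is the projection of a Borel (respectively closed) subset of $X^\N\times X$, hence analytic. Your extra check that failure of $\inf_n x_n=0$ is witnessed by a nonzero positive lower bound $h\vee 0$ is a detail the paper leaves implicit.
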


\begin{proof}
The sets $X_\downarrow$ and $X^\uparrow$ are closed in $X^\N$, since the order relation is norm closed. For $(x_n)\in X_\downarrow$, the assertion that $\inf_nx_n\ne0$ can be expressed as
\[
\ex y\in X\quad \big(y\geqslant0,\;\norm y>0,\; y\leqslant x_n\text{ for all }n\big).
\]
The relation inside the parentheses is Borel and so the complement of $X_{\downarrow0}$ is the union of the Borel set $X^\N\setminus X_\downarrow$ and an analytic set.

For the increasing case, note that if $0\leqslant x_1\leqslant x_2\leqslant\ldots$ has an upper bound $y\in X$, then $y\geqslant0$. Thus, the order bounded part of $X^\uparrow$ is
\[
\Mgd{(x_n)\in X^\uparrow}{\ex y\in X_+\;\all n\; x_n\leqslant y},
\]
which is the projection of a closed subset of $X^\N\times X$. Hence, it is analytic, and its complement in the closed set $X^\uparrow$ is coanalytic.
\end{proof}

We also recall the standard coding of well-founded trees. A tree on $\N$ is a downward closed subset $T\subseteq\N^{<\N}$ containing the empty string $\tom$. The space of all such trees, viewed as a closed subspace of $2^{\N^{<\N}}$, is denoted by $\Tr$. The subspace of well-founded trees, i.e., those having no infinite branch, is denoted by $\WF$. It is classical that $\WF$ is $\mathbf{\Pi}^1_1$-complete \cite{Kechris}*{Section 27}.

%%%%%%%%%%%%%%%%%%%%%%%%%%%%%%%%%%%%%%%%%%%%
%%%%%%%%%%%%%%%%%%%%%%%%%%%%%%%%%%%%%%%%%%%%
%%%%%%%%%%%%%%%%%%%%%%%%%%%%%%%%%%%%%%%%%%%%
%%%%%%%%%%%%%%%%%%%%%%%%%%%%%%%%%%%%%%%%%%%%
%%%%%%%%%%%%%%%%%%%%%%%%%%%%%%%%%%%%%%%%%%%%
%%%%%%%%%%%%%%%%%%%%%%%%%%%%%%%%%%%%%%%%%%%%
%%%%%%%%%%%%%%%%%%%%%%%%%%%%%%%%%%%%%%%%%%%%

\section{Order bases are Schauder bases}\label{sec:bases}

Throughout this section, $X$ is a Banach lattice and $(e_n)$ is a $\sigma$-order basis for $X=[e_n]$. The coordinate maps $e_n^\sharp$ and the projections $P_m$ are defined by
\[
P_mx=\sum_{n=1}^me_n^\sharp(x)e_n,
\qquad
P_mx\conv{\sigma\sf o}{m}x.
\]
We record the following elementary facts.
\begin{lemme}\label{lem:order-facts}\leanformalization{
  \leanstatement{TheoremA/Lean_Code/ThmA/SigmaOrder.lean}{32}{SigmaOrderTendsto.add},
  \leanstatement{TheoremA/Lean_Code/ThmA/SigmaOrder.lean}{49}{SigmaOrderTendsto.smul},
  \leanstatement{TheoremA/Lean_Code/ThmA/SigmaOrder.lean}{62}{SigmaOrderTendsto.orderBounded},
  \leanstatement{TheoremA/Lean_Code/ThmA/SigmaOrder.lean}{82}{SigmaOrderTendsto.abs_le},
  \leanstatement{TheoremA/Lean_Code/ThmA/SigmaOrder.lean}{99}{abs_le_of_tendsto}, and
  \leanstatement{TheoremA/Lean_Code/ThmA/SigmaOrder.lean}{107}{downToZero_tsum_tail}}
Let $X$ be a Banach lattice.
\begin{enumerate}
\item $\sigma$-order convergence is compatible with addition and scalar multiplication.
\item If $x_n\conv{\sigma\sf o}{n}x$, then the set $\{x_n\del n\in\N\}$ is order bounded.
\item If $x_n\conv{\sigma\sf o}{n}x$ and $u\in X_+$ satisfies $|x_n|\leqslant u$ for all $n$, then $|x|\leqslant u$.
\item If $z_i\to z$ in norm and $|z_i|\leqslant u$ for all $i$, where $u\in X_+$, then $|z|\leqslant u$.
\item If $\sum_{j=1}^{\infty}u_j$ is a norm-convergent series in $X_+$ and $U_m=\sum_{j=m}^{\infty}u_j$, then $U_m\downarrow0$.
\end{enumerate}
\end{lemme}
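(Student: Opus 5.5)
The plan is to verify the five items one at a time, directly from the definition of $\sigma$-order convergence and from the fact that the positive cone $X_+$ is norm closed.

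For (1), suppose $x_n\conv{\sigma\sf o}{n}x$ is witnessed by $z_m\downarrow0$ and $y_n\conv{\sigma\sf o}{n}y$ by $w_m\downarrow0$. We use $z_m+w_m$ as the witness for the sum; it is decreasing. To see that its infimum is $0$, let $0\leqslant h\leqslant z_m+w_m$ for all $m$. For fixed $k$ and every $m\geqslant k$ we have $h\leqslant z_m+w_k$, and taking the infimum over $m$ gives $h\leqslant w_k$, since $\inf_m(z_m+w_k)=w_k$ in a vector lattice. Hence $h\leqslant\inf_k w_k=0$. The triangle inequality $|x_n+y_n-x-y|\leqslant|x_n-x|+|y_n-y|$ then finishes the argument. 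For scalars, $|\lambda|z_m$ works when $\lambda\neq0$; the case $\lambda=0$ is trivial.

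For (2), apply the definition with $m=1$: there is an $n_0$ with $|x_n|\leqslant|x|+z_1$ for all $n\geqslant n_0$. The bound $u=|x|+z_1+\sum_{n<n_0}|x_n|$ then dominates every $x_n$.

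For (3), for each $m$ and all large $n$ we have $|x|\leqslant|x_n|+|x_n-x|\leqslant u+z_m$. Thus $(|x|-u)^+\leqslant z_m$ for all $m$, so $(|x|-u)^+=0$, which gives $|x|\leqslant u$.

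For (4), note that $|z_i|\to|z|$ in norm, because $\big||a|-|b|\big|\leqslant|a-b|$ and the norm is a lattice norm. Then $u-|z_i|\in X_+$ converges to $u-|z|$, and $X_+$ is norm closed, so $u-|z|\geqslant0$.

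For (5), each $U_m$ is a norm limit of positive partial sums, so $U_m\in X_+$. Moreover $U_m-U_{m+1}=u_m\geqslant0$, so the sequence is decreasing. Let $0\leqslant h\leqslant U_m$ for all $m$. Then $\norm h\leqslant\norm{U_m}\to0$, since $U_m$ is the tail of a convergent series, and therefore $h=0$.

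None of these steps is a real obstacle. The only point needing a moment's care is in (1), namely that the sum of two sequences decreasing to $0$ again decreases to $0$; the fixed-$k$ argument above handles it.
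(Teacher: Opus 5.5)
Your verification is correct in all five items; the paper records these as elementary facts without writing out a proof, and your arguments are the standard ones. In particular, your device in (3) of passing to $(|x|-u)^+\leqslant z_m$ is the same trick the paper uses in the proof of Proposition~\ref{prop:rho-complete} to show $b_j\downarrow0$.
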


From the above lemma it follows that the functionals $e_n^\sharp$ and the maps $P_m$ are linear and that the vectors $(e_n)$ are linearly independent. For $x\in X$, we define
\begin{equation}\label{eq:rho}
\rho(x):=\inf\Mgd{\norm u}{u\in X_+\text{ and } |P_mx|\leqslant u\text{ for all }m\in\N}.
\end{equation}
Note that the collection of admissible $u$ in \eqref{eq:rho} is non-empty by Lemma~\ref{lem:order-facts}(2).

\begin{lemme}\label{lem:rho-norm}
\leanformalization{
  \leanstatement{TheoremA/Lean_Code/ThmA/AuxNorm.lean}{51}{norm_le_rho},
  \leanstatement{TheoremA/Lean_Code/ThmA/AuxNorm.lean}{78}{rho_eq_zero_iff},
  \leanstatement{TheoremA/Lean_Code/ThmA/AuxNorm.lean}{106}{rho_smul},\\
  \leanstatement{TheoremA/Lean_Code/ThmA/AuxNorm.lean}{121}{rho_add_le}, and
  \leanstatement{TheoremA/Lean_Code/ThmA/AuxNorm.lean}{194}{instNormedAddCommGroupXrho}}
The functional $\rho$ is a norm on $X$ and
\begin{equation}\label{eq:norm-le-rho}
\norm x\leqslant\rho(x)\qquad(x\in X).
\end{equation}
\end{lemme}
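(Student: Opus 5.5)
We verify the norm axioms for $\rho$ and the comparison \eqref{eq:norm-le-rho} directly from the definition \eqref{eq:rho}, using Lemma~\ref{lem:order-facts}; the only non-trivial point is definiteness, and it follows from the comparison.

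\emph{The comparison.} Let $u\in X_+$ be admissible for $x$, i.e.\ $|P_mx|\leqslant u$ for all $m$. Since $P_mx\conv{\sigma\sf o}{m}x$, Lemma~\ref{lem:order-facts}(3) gives $|x|\leqslant u$. As $X$ is a Banach lattice, $\norm x=\norm{|x|}\leqslant\norm u$. Taking the infimum over admissible $u$ yields $\norm x\leqslant\rho(x)$. In particular $\rho(x)=0$ forces $x=0$. Conversely, $P_m0=0$ by linearity of $P_m$, so $u=0$ is admissible and $\rho(0)=0$.

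\emph{Homogeneity.} Fix a scalar $\lambda$. By linearity of $P_m$ we have $|P_m(\lambda x)|=|\lambda||P_mx|$. Hence $u$ is admissible for $x$ if and only if $|\lambda|u$ is admissible for $\lambda x$, provided $\lambda\neq0$. This gives $\rho(\lambda x)=|\lambda|\rho(x)$, and the case $\lambda=0$ is trivial.

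\emph{Triangle inequality.} Let $u$ be admissible for $x$ and $v$ admissible for $y$. Then
\[
|P_m(x+y)|\leqslant|P_mx|+|P_my|\leqslant u+v
\]
for every $m$, so $u+v\in X_+$ is admissible for $x+y$. Therefore
\[
\rho(x+y)\leqslant\norm{u+v}\leqslant\norm u+\norm v,
\]
and taking infima over $u$ and $v$ separately gives $\rho(x+y)\leqslant\rho(x)+\rho(y)$.

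\emph{Finiteness.} By Lemma~\ref{lem:order-facts}(2) the admissible set is nonempty, so $\rho(x)<\infty$ for every $x$.

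There is no genuine obstacle here. The one step that uses the $\sigma$-order structure, as opposed to pure algebra, is the passage from $|P_mx|\leqslant u$ for all $m$ to $|x|\leqslant u$, which is exactly Lemma~\ref{lem:order-facts}(3). The real difficulty of the section, the completeness of $(X,\rho)$, is not part of this lemma.
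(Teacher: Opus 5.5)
Your proposal is correct and follows the paper's proof: you get homogeneity and the triangle inequality (via the admissible bound $u+v$) from linearity of $P_m$. You get $\norm x\leqslant\rho(x)$, and hence definiteness, by applying Lemma~\ref{lem:order-facts}(3) to an admissible $u$. You also spell out finiteness of $\rho$ and $\rho(0)=0$, which the paper leaves implicit.
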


\begin{proof}
Homogeneity is immediate. If $u,v\in X_+$ dominate, respectively, all $|P_mx|$ and all $|P_my|$, then, by linearity, $u+v$ dominates all $|P_m(x+y)|$. Thus, $\rho(x+y)\leqslant\rho(x)+\rho(y)$.

If $u$ is admissible for $x$, then $P_mx\conv{\sigma\sf o}{m}x$ and Lemma~\ref{lem:order-facts}(3) yields $|x|\leqslant u$. Hence, $\norm x\leqslant\norm u$. Taking the infimum over admissible $u$ gives \eqref{eq:norm-le-rho}; in particular, $\rho(x)=0$ implies $x=0$.
\end{proof}

\begin{prop}\label{prop:rho-complete}
\leanformalization{\leanstatement{TheoremA/Lean_Code/ThmA/Complete.lean}{249}{instCompleteSpaceXrho}}
The normed space $(X,\rho)$ is complete.
\end{prop}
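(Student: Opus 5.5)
We use the standard criterion: a normed space is complete if every absolutely convergent series converges. So let $(x_j)$ be a sequence in $X$ with $\sum_j\rho(x_j)<\infty$. For each $j$ choose an admissible $u_j\in X_+$ with $|P_mx_j|\leqslant u_j$ for all $m$ and $\norm{u_j}\leqslant\rho(x_j)+2^{-j}$. Then $\sum_j u_j$ converges in norm. By \eqref{eq:norm-le-rho}, $\sum_j\norm{x_j}<\infty$, so $x=\sum_jx_j$ exists in the Banach space $(X,\norm\cdot)$. The goal is to show that $\rho\bigl(x-\sum_{j<J}x_j\bigr)\to0$. By subadditivity it suffices to prove, for the tails $y_J=\sum_{j\geqslant J}x_j$ (norm limits), that
\[
|P_my_J|\leqslant U_J:=\sum_{j\geqslant J}u_j\qquad\text{for all }m,
\]
since then $\rho(y_J)\leqslant\norm{U_J}\to0$.

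The key point is to identify $P_my_J$. The natural candidate is $w_m:=\sum_{j\geqslant J}P_mx_j$, which converges in norm because $\norm{P_mx_j}\leqslant\norm{u_j}$. By Lemma~\ref{lem:order-facts}(4), $|w_m|\leqslant U_J$ for all $m$. Each $w_m$ lies in $\spa\{e_1,\dots,e_m\}$, a finite-dimensional and hence closed subspace. The coefficients of $P_mx_j$ are $e_n^\sharp(x_j)$ for $n\leqslant m$, independent of $m$. Using continuity of the coordinate maps on the finite-dimensional space $\spa\{e_1,\dots,e_m\}$, we get coefficients $b_n=\sum_{j\geqslant J}e_n^\sharp(x_j)$ (convergent series) with $w_m=\sum_{n\leqslant m}b_ne_n$.

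It then remains to show $w_m\conv{\sigma\sf o}{m}y_J$; uniqueness of expansions then forces $b_n=e_n^\sharp(y_J)$, so $w_m=P_my_J$, which proves the claim above. This is the main obstacle, since we need an order estimate uniform in $m$. We plan to estimate as follows. For $K>J$,
\[
|w_m-y_J|\leqslant\Bigl|\sum_{j=J}^{K-1}(P_mx_j-x_j)\Bigr|+|w^{(K)}_m|+|y_K|,
\]
where $w^{(K)}_m=\sum_{j\geqslant K}P_mx_j$ and $y_K$ is the corresponding tail of $x$. The last two terms are each bounded by $U_K$, using Lemma~\ref{lem:order-facts}(4), and for $y_K$ also Lemma~\ref{lem:order-facts}(3) applied to the $x_j$. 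The first term is a finite sum of $\sigma$-order null sequences in $m$. By Lemma~\ref{lem:order-facts}(1) it is dominated eventually by $z^{(K)}_i$ for some $z^{(K)}_i\downarrow0$.

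We then combine the finitely many witnessing sequences into a single one. Put
\[
z_i=2U_{i}+\bigvee_{K\leqslant i}z^{(K)}_i.
\]
A cleaner choice is to take, for each $i$, the sum of the first $i$ witnessing sequences evaluated at index $i$ and add $2U_i$. This is decreasing, since each witness is decreasing. Its infimum is $0$: by Lemma~\ref{lem:order-facts}(5), $U_i\downarrow0$, and a diagonal check handles the remaining part. If that check resists, we instead note that it suffices to produce for every $i$ a bound eventually in $m$ by $2U_K+z^{(K)}_i$. The set of such bounds then has infimum $0$, and this gives order convergence, which by the same dense-countable-meets trick as Proposition~\ref{prop:order-sigma} yields a sequence. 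Alternatively, one can work directly with order convergence and invoke uniqueness for order bases. Carrying out this combination carefully is where we expect the real work.
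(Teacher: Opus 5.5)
Your overall route is the paper's. You use summable increments with near-optimal admissible $u_j$. You identify $P_m$ of the limit as a norm limit inside $\spa\{e_1,\ldots,e_m\}$ and then apply uniqueness of $\sigma$-order expansions. Everything then reduces to the eventual bounds
\[
|w_m-y_J|\leqslant 2U_K+z^{(K)}_i\qquad(K>J,\ i\in\N,\ m\text{ large}).
\]
Bounding the tail $y_K$ by $U_K$, via $|x_j|\leqslant u_j$ from Lemma~\ref{lem:order-facts}(3), is a harmless simplification of the paper's separate tail $T_R=\sum_{r\geqslant R}|\Delta_r|$.

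The step that fails as written is the combination. Both $2U_i+\bigvee_{K\leqslant i}z^{(K)}_i$ and $2U_i+\sum_{K\leqslant i}z^{(K)}_i$ acquire a new term $z^{(i)}_i$ at stage $i$. So ``each witness is decreasing'' does not make them decreasing, and nothing forces the diagonal $z^{(i)}_i$ to go to zero. The witnesses for $\sum_{j=J}^{K-1}(P_mx_j-x_j)$ carry no control uniform in $K$. Indeed, you could add a fixed $0<v\in X_+$ to $z^{(K)}_i$ for all $i\leqslant K$ without affecting their validity, and then your $z_i\geqslant v$ for every $i$. So the ``diagonal check'' does not go through.

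The fix is to take a meet instead of a join or sum, exactly as the paper does with $b_j=\bigwedge_{R=1}^j(U_R+T_R+w^R_j)$. In your notation, put
\[
c_i=\bigwedge_{K=J+1}^{J+i}\bigl(2U_K+z^{(K)}_i\bigr).
\]
This has the three properties you need:
\begin{enumerate}
\item It eventually dominates $|w_m-y_J|$, being a finite meet of eventual bounds.
\item It is decreasing in $i$, since more terms enter the meet and each term decreases.
\item If $0\leqslant h\leqslant c_i$ for all $i$, then for each fixed $K$ one has $(h-2U_K)^+\leqslant z^{(K)}_i$ for all large $i$. Hence $h\leqslant 2U_K$ for every $K$, and so $h=0$.
\end{enumerate}

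Your fallback is also valid. The set of all $z\in X_+$ eventually dominating $|w_m-y_J|$ is closed under finite meets, and the same computation shows its infimum is $0$. Since $X=[e_n]$ is separable, Proposition~\ref{prop:order-sigma} then produces a regulating sequence. That works, but it spends separability and the dense-subset argument on what the explicit finite meet gives directly. The meet-closedness you rely on there is precisely the idea missing from your first attempt.
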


\begin{proof}
Let $(x_j)$ be a $\rho$-Cauchy sequence. It suffices to prove that a subsequence converges with respect to $\rho$, since a Cauchy sequence with a convergent subsequence converges to the same limit. Choose such a subsequence, relabeled as $(x_r)$, so that
\[
\sum_{r=1}^{\infty}\rho(x_{r+1}-x_r)<\infty,
\]
and put $\Delta_r=x_{r+1}-x_r$. For each $r$, choose $u_r\in X_+$ satisfying
\begin{equation}\label{eq:ur-bound}
|P_m\Delta_r|\leqslant u_r\quad(m\in\N)
\end{equation}
and
\[
\norm{u_r}\leqslant\rho(\Delta_r)+2^{-r}.
\]
Then $\sum_r\norm{u_r}<\infty$. By Lemma~\ref{lem:rho-norm}, we also have $\sum_r\norm{\Delta_r}<\infty$. For $R\in\N$, set
\[
U_R=\sum_{r=R}^{\infty}u_r,
\qquad
T_R=\sum_{r=R}^{\infty}|\Delta_r|.
\]
By Lemma~\ref{lem:order-facts}(5), we have
\begin{equation}\label{eq:tails-down}
U_R\downarrow0,
\qquad
T_R\downarrow0.
\end{equation}
Moreover, for $S>R$ we may write
\[
|x_S-x_R|\leqslant\sum_{r=R}^{S-1}|\Delta_r|\leqslant T_R,
\]
so $(x_R)$ is Cauchy for the norm $\norm{\cdot}$. Let $x=\lim_Rx_R$ be the limit with respect to this norm. Passing to the norm limit in the above inequality gives
\begin{equation}\label{eq:x-xR}
|x-x_R|\leqslant T_R.
\end{equation}
Fix $m$. From \eqref{eq:ur-bound}, we have
\[
|P_mx_S-P_mx_R|
\leqslant\sum_{r=R}^{S-1}|P_m\Delta_r|
\leqslant U_R
\qquad(S>R).
\]
Thus, $(P_mx_R)_R$ is Cauchy for the norm $\norm{\cdot}$. Let
\[
y_m=\lim_{R\to\infty}P_mx_R.
\]
Again, passing to the norm limit gives
\begin{equation}\label{eq:ym-pmxR}
|y_m-P_mx_R|\leqslant U_R
\qquad(m,R\in\N).
\end{equation}
We claim that $y_m\conv{\sigma\sf o}{m}x$. We begin by fixing $R$. Since $P_mx_R\conv{\sigma\sf o}{m}x_R$, we may choose a regulator $w^R_j\downarrow0$ so that, for every $j$, we have the inequality
\begin{equation}\label{eq:wR}
|P_mx_R-x_R|\leqslant w_j^R
\end{equation}
for all sufficiently large $m$. Combining \eqref{eq:x-xR}, \eqref{eq:ym-pmxR} and \eqref{eq:wR} we find that, for every fixed $R$ and $j$, we have
\begin{equation}\label{eq:y-x-eventual}
|y_m-x|\leqslant U_R+T_R+w_j^R
\end{equation}
for all sufficiently large $m$.

For $j\in\N$, put
\begin{equation}\label{eq:bj}
b_j=\bigwedge_{R=1}^j\big(U_R+T_R+w_j^R\big).
\end{equation}
Clearly, $(b_j)$ is decreasing. For fixed $j$, the finitely many estimates \eqref{eq:y-x-eventual} with $R=1,\ldots,j$ hold simultaneously for all sufficiently large $m$, and therefore the inequality $|y_m-x|\leqslant b_j$ holds eventually.
It remains to see that $b_j\downarrow0$. Suppose, to the contrary, that $0\leqslant h\leqslant b_j$ for all $j$. Fix $R$. For every $j\geqslant R$,  the definition \eqref{eq:bj} gives
\[
h\leqslant U_R+T_R+w_j^R.
\]
Letting $j\to\infty$ and using that $w_j^R\downarrow0$, we get $(h-(U_R+T_R))^+=0$, and so $h\leqslant U_R+T_R$. This holds for all $R$. Since $U_R,T_R\downarrow0$, we also have $U_R+T_R\downarrow0$, from which it follows that $h=0$. Thus, $b_j\downarrow0$ and $y_m\conv{\sigma\sf o}{m}x$, as claimed.

Now, since the vectors $P_mx_R$ belong to the finite-dimensional space $E_m=\spa\{e_1,\ldots,e_m\}$, we deduce that $y_m\in E_m$ as well. Consequently, there are scalars $(a_n)$ such that
\[
y_m=\sum_{n=1}^ma_ne_n.
\]
Since $y_m\conv{\sigma\sf o}{m}x$, uniqueness of the $\sigma$-order expansion gives $a_n=e_n^\sharp(x)$, i.e.,
\begin{equation}\label{eq:ym-pmx}
y_m=P_mx.
\end{equation}
Combining \eqref{eq:ym-pmxR} and \eqref{eq:ym-pmx}, we obtain
\[
|P_m(x-x_R)|\leqslant U_R
\qquad(m,R\in\N).
\]
Thus, $U_R$ is admissible in the definition of $\rho(x-x_R)$ and
\[
\rho(x-x_R)\leqslant\norm{U_R}\longrightarrow0.
\]
This proves that our chosen subsequence converges with respect to $\rho$, and hence the original $\rho$-Cauchy sequence converges in the $\rho$ norm.
\end{proof}

We can now proceed with the proof of our main result in this section.
%\begin{thm}\label{thm:bases-main}
%Let $(e_n)$ be a $\sigma$-order basis for a Banach lattice $X=[e_n]$. Then $(e_n)$ is a Schauder basis for $X$. Moreover, there is a constant $C<\infty$ so that, for every $N$ and all scalars $a_1,\ldots,a_N$, we have
%\begin{equation}\label{eq:maxineq}
%\BNORM{\bigvee_{m=1}^N\left|\sum_{k=1}^ma_ke_k\right|}
%\leqslant C\Norm{\sum_{k=1}^Na_ke_k}.
%\end{equation}
%\end{thm}

\begin{proof}[Proof of Theorem \ref{theoremA}]
By Proposition \ref{prop:order-sigma} it is enough to assume $(e_n)$ is a $\sigma$-order basis for a Banach lattice $X=[e_n]$. Now, by Proposition~\ref{prop:rho-complete}, $(X,\rho)$ is a Banach space. Lemma~\ref{lem:rho-norm} says that the identity map
\[
I:(X,\rho)\longrightarrow(X,\norm{\cdot})
\]
is continuous and bijective. Hence, the inverse mapping theorem yields a constant $C<\infty$ with
\begin{equation}\label{eq:rho-C}
\rho(x)\leqslant C\norm x
\qquad(x\in X).
\end{equation}
Suppose we have $|P_mx|\leqslant u$ for every $m\in \N$. In particular, $\norm{P_mx}\leqslant\norm u$. Taking the infimum over all such $u$ gives
\[
\norm{P_mx}\leqslant\rho(x)\leqslant C\norm x.
\]
Thus, the maps $P_m$ are uniformly bounded in the original norm.

Since $X=[e_n]$, the union of the finite-dimensional spaces $\spa\{e_1,\ldots,e_N\}$ is norm dense in $X$. If $y\in\spa\{e_1,\ldots,e_N\}$, then the finite coefficient sequence for $y$, followed by zeros, is a $\sigma$-order expansion; by uniqueness, $P_my=y$ for all $m\geqslant N$. Hence, for $m\geqslant N$ we have
\[
\norm{P_mx-x}\leqslant\norm{P_m(x-y)}+\norm{y-x}\leqslant(C+1)\norm{x-y}.
\]
Approximating $x$ by such a $y$, we conclude that $P_mx\to x$ in norm.

Next, we note that the norm expansion is unique. Indeed, if $\sum_{n=1}^M b_ne_n\to x$ in norm, then for fixed $m$ and all $M\geqslant m$, we have
\[
P_m\left(\sum_{n=1}^M b_ne_n\right)=\sum_{n=1}^mb_ne_n.
\]
 Using the continuity of the $P_m$ to pass to the norm limit gives $P_mx=\sum_{n=1}^mb_ne_n$, so $b_n=e_n^\sharp(x)$ by linear independence. Thus, $(e_n)$ is Schauder.

Finally, we verify the maximal inequality. Take $x=\sum_{n=1}^Na_ne_n$. Then the element
\[
v=\bigvee_{m=1}^N\left|\sum_{k=1}^ma_ke_k\right|
\]
is the least positive order bound of the family $(P_mx)_{m\geqslant1}$, since $P_mx=x$ for $m\geqslant N$. Thus, $v$ is admissible in \eqref{eq:rho}, and every admissible $u$ satisfies $v\leqslant u$. By monotonicity of the lattice norm, we have $\rho(x)=\norm v$, and thus \eqref{eq:rho-C} implies \eqref{eq:maxineq}.
\end{proof}

%\begin{cor}\label{cor:order-basis-zfc}
%If $(e_n)$ is an order basis or a $\sigma$-order basis for a Banach lattice $X=[e_n]$, then $(e_n)$ is a Schauder basis. In fact, it is a uniform basis.
%\end{cor}

%\begin{proof}
%The $\sigma$-order case is Theorem~\ref{thm:bases-main}. If $(e_n)$ is an order basis, then $X=[e_n]$ is separable, so order and $\sigma$-order convergence agree by Proposition~\ref{prop:order-sigma}. Thus $(e_n)$ is a $\sigma$-order basis. The maximal inequality \eqref{eq:maxineq}, together with \cite{taylor}*{Theorem 2.1}, gives uniform convergence of the partial sums, hence a uniform basis.
%\end{proof}

%%%%%%%%%%%%%%%%%%%%%%%%%%%%%%%%%%%%%%%%%%%
%%%%%%%%%%%%%%%%%%%%%%%%%%%%%%%%%%%%%%%%%%%
%%%%%%%%%%%%%%%%%%%%%%%%%%%%%%%%%%%%%%%%%%%
%%%%%%%%%%%%%%%%%%%%%%%%%%%%%%%%%%%%%%%%%%%
%%%%%%%%%%%%%%%%%%%%%%%%%%%%%%%%%%%%%%%%%%%
%%%%%%%%%%%%%%%%%%%%%%%%%%%%%%%%%%%%%%%%%%%
%%%%%%%%%%%%%%%%%%%%%%%%%%%%%%%%%%%%%%%%%%%
%%%%%%%%%%%%%%%%%%%%%%%%%%%%%%%%%%%%%%%%%%%
%%%%%%%%%%%%%%%%%%%%%%%%%%%%%%%%%%%%%%%%%%%

\section{Generalities on the weighted tree construction}\label{sec:tree-template}

We now prepare for the two descriptive-set-theoretic constructions. If $s,t\in\N^{<\N}$, then $s\concat t$ denotes concatenation, and if $n\in\N$ we use $s\concat n$ as an abbreviation for $s\concat(n)$. For a finite string $s=(s_1,\ldots,s_k)$, put $|s|=k$ and, when $s\ne\tom$, let $\last(s)=s_k$.

Fix once and for all the constants
\[
\gamma=\frac1{10},
\qquad
\lambda_n=2^{-n},\quad(n\in\N).
\]
Define weights on $\N^{<\N}$ by
\[
\omega(\tom)=1
\]
and for $s=(s_1,\ldots,s_k)$,
\[
\omega(s)=\gamma^k\prod_{i=1}^{k-1}\lambda_{s_i},\quad(k\geq 1).
\]
With this definition, we have $\omega(n)=\gamma$ and, if $s\ne\tom$, then
\begin{equation}\label{eq:weights-child}
\omega(s\concat n)=\gamma\lambda_{\last(s)}\omega(s),
\end{equation}
which is independent of  $n$.

Let $E$ be a separable Banach lattice. We denote by $V_E$ the Banach lattice of all indexed sequences
\[
x=(x_s)_{s\in\N^{<\N}}
\]
with $x_s\in E$ and
\[
\norm{x}_{V_E}=\sup_{s\in\N^{<\N}}\omega(s)\norm{x_s}_E<\infty,
\]
with the order and lattice operations on $V_E$ defined coordinatewise.

\begin{defi}\label{def:finite-rank}
An element $x\in V_E$ is called finite-rank if there is a finite downward closed tree $F\subseteq\N^{<\N}$ containing $\tom$ such that, whenever $s\in F$ and $s\concat n\notin F$, the whole subtree above $s\concat n$ is constant with value $x_s$, i.e.,
\[
x_{s\concat n\concat t}=x_s
\qquad(t\in\N^{<\N}).
\]
Let $\ku S_E$ denote the set of finite-rank elements and put
\[
Z_E=\ov{\ku S_E}^{\norm{\cdot}_{V_E}}.
\]
\end{defi}

\begin{lemme}\label{lem:S-template}
\leanformalization{\leanstatement{ThmBC/Tree/Dense.lean}{293}{finiteRank_template}}
The set $\ku S_E$ is a vector sublattice of $V_E$. Consequently, $Z_E$ is a closed Banach sublattice of $V_E$. Moreover, $Z_E$ is separable.
\end{lemme}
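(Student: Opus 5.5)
We need to show three things: $\ku S_E$ is a vector sublattice, $Z_E$ is a closed Banach sublattice, and $Z_E$ is separable. Before starting, note that $V_E$ really is a Banach lattice. Completeness is inherited from $E$ coordinatewise, because the weights $\omega(s)$ are strictly positive. The norm is a lattice norm since $|x|\leqslant|y|$ coordinatewise gives $\norm{x_s}_E\leqslant\norm{y_s}_E$ for every $s$.

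\textbf{Sublattice.} The key observation is monotonicity of witnesses: if $F$ witnesses that $x$ is finite-rank and $F\subseteq F'$ with $F'$ a finite tree, then $F'$ also witnesses it. To check this, take $s'\in F'$ and $s'\concat n\notin F'$. Let $s$ be the longest initial segment of $s'$ lying in $F$. If $s=s'$, then $s'\concat n\notin F$, and the subtree above $s'\concat n$ is constant equal to $x_{s'}$ by hypothesis. Otherwise $s'$ lies in the subtree above some $s\concat m$ with $s\concat m\notin F$. Then $x_{s'}=x_s$ and $x_{s'\concat n\concat t}=x_s$, so the subtree above $s'\concat n$ is again constant with value $x_{s'}$. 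Now, given $x,y\in\ku S_E$ with witnesses $F,G$, the finite tree $F\cup G$ witnesses both. The conditions ``constant equal to $x_s$ on the subtree above $s\concat n$'' are preserved under the coordinatewise operations $x+y$, $ax$ and $x\vee y$. Hence $\ku S_E$ is a vector sublattice. Elements of $\ku S_E$ are bounded, since they take only finitely many values and $\omega\leqslant1$, so $\ku S_E\subseteq V_E$.

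\textbf{Closure.} The lattice operations and the vector operations are norm continuous in $V_E$; for instance $\norm{|x|-|y|}\leqslant\norm{x-y}$. It follows that the norm closure of a vector sublattice is a closed vector sublattice, hence a Banach lattice. This gives the second claim.

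\textbf{Separability.} Fix a countable dense set $D\subseteq E$. Let $\ku S_D$ be the set of finite-rank elements all of whose values lie in $D$, built on some finite tree. This set is countable, because there are countably many finite trees $F$ and an element is determined by $F$ together with its values on $F$. Given $x\in\ku S_E$ with witness $F$ and $\eps>0$, choose $d_s\in D$ with $\norm{x_s-d_s}<\eps$ for $s\in F$, and extend by constancy to obtain $y\in\ku S_D$. Every coordinate of $x-y$ equals some $x_s-d_s$ with $s\in F$, so $\norm{x-y}_{V_E}\leqslant\sup_s\omega(s)\,\eps\leqslant\eps$. Hence $\ku S_D$ is dense in $\ku S_E$, and therefore in $Z_E$.

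The only step requiring care is the monotonicity of witnesses, specifically the case analysis on the longest initial segment of $s'$ in $F$. Everything else is routine.
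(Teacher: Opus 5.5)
Your proof is correct and follows the paper's argument. A common finite witness for two finite-rank elements gives closure under the vector and lattice operations, the norm closure of a sublattice is a closed sublattice, and finite-rank elements with values in a countable dense $D\subseteq E$ give a countable dense subset. You also justify the monotonicity of witnesses (via the longest initial segment of $s'$ lying in $F$) and the fact that $V_E$ is a Banach lattice, both of which the paper takes for granted.
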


\begin{proof}
If $F$ witnesses that $x$ is finite-rank and $F\subseteq H$ is finite and downward closed, then $H$ also witnesses that $x$ is finite-rank. Thus, for $x,y\in\ku S_E$, we may choose a common finite downward closed witness $H$. Outside $H$, both elements are constant on the omitted subtree, and therefore so are $x+y$, $x-y$, $x\vee y$ and $x\wedge y$. Hence, $\ku S_E$ is a vector sublattice, and its norm closure is a closed Banach sublattice.

For separability, let $D\subseteq E$ be countable and norm dense. For a fixed finite downward closed witness, a finite-rank element is determined by finitely many values in $E$. There are only countably many such finite trees, and elements whose defining values lie in $D$ form a countable dense subset of $\ku S_E$, hence of $Z_E=\ov{\ku S_E}$.
\end{proof}

\begin{lemme}\label{lem:child-convergence-template}
\leanformalization{\leanstatement{ThmBC/Tree/Dense.lean}{374}{child_convergence_template}}
For every $x\in Z_E$ and every $s\in\N^{<\N}$, we have
\[
x_{s\concat n}\longrightarrow x_s
\qquad\text{in }E.
\]
\end{lemme}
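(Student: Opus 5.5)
The plan is the standard density argument: prove the claim first for finite-rank elements, then pass to the closure $Z_E$ by an $\eps/3$ estimate.

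\emph{Finite-rank case.} Let $y\in\ku S_E$ with finite witness $F$, and fix $s\in\N^{<\N}$. Since $F$ is finite, only finitely many $n$ satisfy $s\concat n\in F$. For every other $n$ we claim $y_{s\concat n}=y_s$, so the sequence $(y_{s\concat n})_n$ is eventually constant with value $y_s$.
\begin{itemize}
\item If $s\in F$ and $s\concat n\notin F$, the definition gives $y_{s\concat n}=y_s$ directly.
\item If $s\notin F$, let $s_0$ be the longest initial segment of $s$ lying in $F$; it exists because $\tom\in F$. Write $s=s_0\concat k\concat t$ with $s_0\concat k\notin F$. Then $s\concat n=s_0\concat k\concat(t\concat n)$ lies in the same constant subtree, so $y_{s\concat n}=y_{s_0}=y_s$.
\end{itemize}

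\emph{Approximation.} Fix $x\in Z_E$, $s$, and $\eps>0$. Choose $y\in\ku S_E$ with $\norm{x-y}_{V_E}<\delta$. Because every weight $\omega(t)$ is strictly positive, the definition of the norm gives, for every node $t$,
\[
\norm{x_t-y_t}_E\leqslant\omega(t)^{-1}\norm{x-y}_{V_E}.
\]
By \eqref{eq:weights-child}, the weight $\omega(s\concat n)$ does not depend on $n$. For $s\neq\tom$ it equals $\gamma\lambda_{\last(s)}\omega(s)$, and $\omega(n)=\gamma$. Call this common value $c_s>0$. Then for every $n$,
\[
\norm{x_{s\concat n}-y_{s\concat n}}_E\leqslant c_s^{-1}\delta
\qquad\text{and}\qquad
\norm{x_s-y_s}_E\leqslant\omega(s)^{-1}\delta.
\]
Choose $\delta$ so that both right-hand sides are below $\eps/3$. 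For all sufficiently large $n$ we have $y_{s\concat n}=y_s$, and the triangle inequality gives
\[
\norm{x_{s\concat n}-x_s}_E
\leqslant\norm{x_{s\concat n}-y_{s\concat n}}_E+\norm{y_{s\concat n}-y_s}_E+\norm{y_s-x_s}_E
<\eps.
\]

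\emph{Main obstacle.} The only point needing care is that the error bound at the children is uniform in $n$. This holds because the child weight is independent of $n$ and nonzero, exactly as \eqref{eq:weights-child} provides; the rest is routine.
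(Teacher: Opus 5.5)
Your proof is correct and is the paper's argument: the finite-rank case, which you spell out in more detail than the paper's ``immediate from the definition'' (including the case $s\notin F$), followed by an $\eps/3$ approximation. As in the paper, the approximation relies on the weight $\omega(s\concat n)$ being independent of $n$, so the errors at the children are controlled uniformly in $n$.
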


\begin{proof}
For finite-rank elements this is immediate from the definition. Let $x\in Z_E$, fix $s\in\N^{<\N}$ and let $\eps>0$. Choose $u\in\ku S_E$ so close to $x$ in $V_E$ that
\[
\norm{x_s-u_s}_E<\eps/3
\]
and
\[
\norm{x_{s\concat n}-u_{s\concat n}}_E<\eps/3
\qquad(n\in\N).
\]
The latter uniform estimate is possible because the weights of the children $s\concat n$ are independent of $n$ by \eqref{eq:weights-child}, and also at the root, where $\omega(n)=\gamma$. Since $u_{s\concat n}=u_s$ for all sufficiently large $n$, we get $\norm{x_{s\concat n}-x_s}_E<\eps$ eventually.
\end{proof}

\begin{lemme}\label{lem:eventually-constant-template}
\leanformalization{\leanstatement{ThmBC/Tree/Eventual.lean}{246}{eventually_constant_template}}
Let $u=(u_s)_{s\in\N^{<\N}}\in V_E$ be such that
\[
M=\sup_s\norm{u_s}_E<\infty.
\]
Assume that, for every $s\in\N^{<\N}$, there is $N_s$ so that
\[
u_{s\concat n}=u_s
\qquad(n\geqslant N_s).
\]
Then $u\in Z_E$.
\end{lemme}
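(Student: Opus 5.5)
We approximate $u$ in $V_E$ by finite-rank elements obtained by truncating the tree at large depth and at large branching indices. The weights decay along branches, $\omega(s)\leqslant\gamma^{|s|}$, and the tree structure lets us freeze values beyond a finite subtree.

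Fix $\eps>0$ and choose $k$ with $2M\gamma^{k}<\eps$. Let $F$ be the finite downward closed tree of all $s=(s_1,\ldots,s_j)$ with $j\leqslant k$ and $s_i<N_{(s_1,\ldots,s_{i-1})}$ for every $i\leqslant j$. This set is finite, because at each node only finitely many children are allowed and the depth is bounded. It is downward closed and contains $\tom$. Define $v$ by $v_s=u_s$ for $s\in F$. If $s\notin F$, let $t$ be the longest initial segment of $s$ lying in $F$, and set $v_s=u_t$. Then $F$ witnesses that $v$ is finite-rank: if $t\in F$ and $t\concat n\notin F$, every node $t\concat n\concat r$ has $t$ as its longest initial segment in $F$, so $v_{t\concat n\concat r}=u_t=v_t$. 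Hence $v\in\ku S_E$.

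The key observation is the estimate on $\norm{u-v}_{V_E}$. Take $s\notin F$ and let $t$ be its longest initial segment in $F$, so $s=t\concat n\concat r$ with $t\concat n\notin F$. There are two cases.

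In the first case, $t\concat n$ fails to be in $F$ only because $|t|=k$. Then $|s|>k$, so
\[
\omega(s)\norm{u_s-v_s}\leqslant\gamma^{|s|}\cdot2M<\eps.
\]

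In the second case, $|t|<k$ and $n\geqslant N_t$, so $u_{t\concat n}=u_t$. Here we propagate along the branch: $u_{t\concat n\concat r}$ need not equal $u_t$, so the choice of $v$ must be refined. We replace the definition by collapsing: for $s=(s_1,\ldots,s_j)$, define the reduced string $\pi(s)$ by scanning left to right and deleting each entry $s_i$ with $s_i\geqslant N_{\pi(s_1,\ldots,s_{i-1})}$. The hypothesis applied inductively gives $u_s=u_{\pi(s)}$, where we use $u_{a\concat n}=u_a$ recursively along the branch. This needs care: the recursion is $u_{a\concat n\concat r}$ versus $u_{a\concat r}$, which the hypothesis does not give directly. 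This is the main obstacle.

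If that fails, we fall back on a density approach. Because all $\omega$ on depth-$j$ nodes are at most $\gamma^j$, only the depth truncation matters in norm. So we define $v_s=u_s$ for $|s|\leqslant k$ and $v_s=u_{s|k}$ for $|s|>k$, where $s|k$ is the initial segment of length $k$. Then $\norm{u-v}_{V_E}\leqslant2M\gamma^{k+1}<\eps$. However, $v$ is not finite-rank, since its depth-$\leqslant k$ part is infinite.

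We fix this by induction on $k$. Nodes $s$ with $|s|\leqslant k$ and some entry beyond its $N$ satisfy $u_{t\concat n}=u_t$ at the first such position, but the deeper coordinates differ. So we instead define $v_{t\concat n\concat r}=u_t$ for all $r$ whenever $n\geqslant N_t$, and $v=u$ on the finite tree $F$ of depth $\leqslant k$; everything below depth $k$ is frozen.

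We then need to show that $\sup_r\omega(t\concat n\concat r)\norm{u_{t\concat n\concat r}-u_t}$ is small. For $r\neq\tom$ this is at most $2M\,\omega(t\concat n\concat r)\leqslant2M\gamma^{|t|+2}\lambda_n$, using \eqref{eq:weights-child}, which contributes the factor $\lambda_{\last(t\concat n)}=\lambda_n$. Enlarging $N_t$ so that $\lambda_n<\eps/(2M)$ for $n\geqslant N_t$, which keeps $F$ finite, makes this uniformly less than $\eps$. For $r=\tom$ the difference is $0$. This decay of $\lambda_n$ is exactly what resolves the obstacle, so the proof closes: $\norm{u-v}_{V_E}<\eps$ with $v\in\ku S_E$, hence $u\in Z_E$.
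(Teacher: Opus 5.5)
Your final construction is correct and is essentially the paper's proof. You take $v=u$ on a finite tree $F$ of depth $\leqslant k$ and freeze $v$ to $u_t$ above each first exit $t\concat n\notin F$, with the branching cutoffs enlarged beyond $N_t$ so that $2M\lambda_n<\eps$. As in the paper, eventual constancy removes the error at the exit node itself. Your bound $2M\gamma^{|t|+2}\lambda_n$ on proper descendants plays the role of the paper's $2MB(t\concat n)$, and choosing one uniform cutoff is a slight simplification of the paper's $K_s$.

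The earlier detours should be cut from a write-up: the collapsing map $\pi$, the depth-only truncation that is not finite-rank, and the ``induction on $k$'', which never actually occurs. Keep the ``longest initial segment in $F$'' definition of $v$, so that freezing is applied only at the first exit from $F$.
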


\begin{proof}
If $M=0$, there is nothing to prove. For a non-empty node $r$, put
\[
B(r)=\sup\Mgd{\omega(t)}{t\text{ is a proper extension of }r}.
\]
A direct computation gives
\begin{equation}\label{eq:B-r}
B(r)=\gamma\lambda_{\last(r)}\omega(r).
\end{equation}
Indeed, the largest weight strictly below $r$ occurs at an immediate child of $r$, and every further descent multiplies by a factor at most $\gamma\lambda_1<1$.

For every $\eta>0$, the set $\{r\ne\tom\del B(r)\geqslant\eta\}$ is finite. Indeed, if $|r|=k$, then $B(r)\leqslant\gamma(\gamma/2)^k$, which bounds $k$, and at each fixed level only finitely many products $\prod_i2^{-r_i}$ can exceed a prescribed positive number. In particular,
\[
\sup\Mgd{\omega(t)}{|t|>L}\longrightarrow0
\qquad(L\to\infty),
\]
and, for each fixed $s$, $B(s\concat n)\to0$ as $n\to\infty$.

Fix $\eps>0$. Choose $L$ so large that
\[
2M\sup\Mgd{\omega(t)}{|t|>L}<\eps.
\]
We build a finite downward closed tree $F$. Start with $\tom\in F$. Recursively, and stopping after level $L$, whenever $s\in F$ and $|s|<L$, choose $K_s\geqslant N_s$ so large that
\[
2M B(s\concat n)<\eps
\qquad(n\geqslant K_s),
\]
and put $s\concat n$ into $F$ for $1\leqslant n<K_s$. No descendants are added below nodes of depth $L$.

Define a finite-rank element $v$ by setting $v_s=u_s$ for $s\in F$ and, whenever $s\in F$ but $s\concat n\notin F$, declare the whole subtree above $s\concat n$ to be constantly $u_s$. We claim that $\norm{u-v}_{V_E}\leqslant\eps$. Note that if $t\in F$, there is no error. Otherwise, $t$ lies in the omitted subtree above a first node $s\concat n\notin F$ with $s\in F$. If $|s|=L$, then $|t|>L$, and so
\[
\omega(t)\norm{u_t-v_t}_E\leqslant2M\omega(t)<\eps.
\]
If $|s|<L$, then $n\geqslant K_s\geqslant N_s$, so $u_{s\concat n}=u_s=v_{s\concat n}$, and at the node $s\concat n$ itself there is no error. Finally, at every proper descendant $t$ of $s\concat n$, we have $\omega(t)\leqslant B(s\concat n)$, and hence
\[
\omega(t)\norm{u_t-v_t}_E\leqslant2M B(s\concat n)<\eps.
\]
Thus, $u$ is in the closure of $\ku S_E$.
\end{proof}

We shall use the following elementary operation on trees. For $T\in\Tr$, define its fattening $\widehat T$ by
\[
\widehat T=\Mgd{s=(s_1,\ldots,s_k)\in\N^{<\N}}{\ex a=(a_1,\ldots,a_k)\in T\text{ with }a_i\leqslant s_i\text{ for all }i}.
\]
For $k=0$, this just says that $\tom\in\widehat T$. It is immediate from the downward closedness of $T$ that $\widehat T$ is again a tree.

\begin{lemme}\label{lem:fattening}
\leanformalization{\leanstatement{ThmBC/Tree/Tree.lean}{347}{fattening_template}}
The map $T\mapsto\widehat T$ is continuous from $\Tr$ to $\Tr$ and preserves well-foundedness, in the sense that
\[
T\in\WF\quad\equi\quad\widehat T\in\WF.
\]
Moreover, if $b=(b_i)_{i=1}^\infty$ is an infinite branch through $T$, then
\[
L_b=\Mgd{(n_1,\ldots,n_k)}{b_i\leqslant n_i\text{ for }1\leqslant i\leqslant k}
\]
is a subtree of $\widehat T$ and, whenever $s\in L_b$, one has $s\concat n\in L_b$ for all sufficiently large $n$.
\end{lemme}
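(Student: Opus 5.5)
The plan is to check the three assertions separately, working directly from the definition of $\widehat T$. Everything hinges on a single observation: membership of $s$ in $\widehat T$ depends only on the finitely many nodes $a\in T$ with $|a|=|s|$ and $a_i\leqslant s_i$ coordinatewise.

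\emph{Continuity.} The topology on $\Tr$ is the product topology from $2^{\N^{<\N}}$, so it suffices to show that each coordinate map $T\mapsto \mathbf 1_{\widehat T}(s)$ is continuous. Fix $s=(s_1,\ldots,s_k)$ and put $A_s=\{a\in\N^{k}\del a_i\leqslant s_i\}$, which is finite. Then
\[
s\in\widehat T\iff T\cap A_s\neq\tom.
\]
The right-hand side depends only on the finitely many coordinates $\mathbf 1_T(a)$, $a\in A_s$. Hence it is clopen, and the map is continuous.

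\emph{Well-foundedness.} The inclusion $T\subseteq\widehat T$ holds by taking $a=s$, so an infinite branch of $T$ gives one of $\widehat T$. For the converse, suppose $(s_1,s_2,\ldots)$ is an infinite branch of $\widehat T$. For each $k$ the set
\[
W_k=\{a\in T\cap\N^k\del a_i\leqslant s_i \text{ for } i\leqslant k\}
\]
is finite and nonempty. Since $T$ is downward closed, restricting an element of $W_{k+1}$ to its first $k$ coordinates gives an element of $W_k$. The sets $W_k$ therefore form a finitely branching infinite tree, and König's lemma yields an infinite branch $a$ with $a|k\in T$ for all $k$. So $T\notin\WF$. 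This compactness step is the only one that is not purely syntactic, and I expect it to be the main point. The boundedness $a_i\leqslant s_i$ is exactly what makes the levels finite, so that König's lemma applies.

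\emph{The tree $L_b$.} If $b$ is a branch of $T$, then $b|k\in T$ for every $k$. Given $s\in L_b$ with $|s|=k$, the witness $a=b|k$ shows $s\in\widehat T$, so $L_b\subseteq\widehat T$. The set $L_b$ is downward closed and contains $\tom$, since its defining condition restricts to prefixes. Finally, for $s\in L_b$ with $|s|=k$ we have $s\concat n\in L_b$ precisely when $n\geqslant b_{k+1}$, which holds for all sufficiently large $n$.
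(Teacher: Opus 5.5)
Your proposal is correct and follows the paper's proof essentially step for step. Continuity comes from the finite box $\prod_{i\le k}\{1,\ldots,s_i\}$ of relevant nodes, and the nontrivial direction of well-foundedness comes from K\"onig's lemma applied to the finitely branching tree of nodes of $T$ bounded coordinatewise by a branch of $\widehat T$. The claims about $L_b$ are checked directly; you also verify the inclusion $L_b\subseteq\widehat T$ via the witnesses $b|k$, which the paper leaves implicit.
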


\begin{proof}
For fixed $s=(s_1,\ldots,s_k)$, the membership $s\in\widehat T$ depends only on the finitely many nodes in $\prod_{i=1}^k\{1,\ldots,s_i\}$, so the map is continuous. If $T$ has an infinite branch, the same branch witnesses that $\widehat T$ is ill-founded. Conversely, suppose that $\widehat T$ has an infinite branch $s=(s_i)_{i=1}^{\infty}$. For each $k$, choose $a^{(k)}\in T$ of length $k$ with $a_i^{(k)}\leqslant s_i$ for $1\leqslant i\leqslant k$. Then
\[
B=\Mgd{a\in T}{a_i\leqslant s_i\text{ for all }1\leqslant i\leqslant |a|}
\]
is a finitely branching tree with nodes at every level. By K\"onig's lemma, $B$, and hence $T$, has an infinite branch.

The final assertion follows directly from the definition of $L_b$, since if we have $s=(n_1,\ldots,n_k)\in L_b$ then we also have $s\concat n\in L_b$ for all $n\geqslant b_{k+1}$.
\end{proof}

\section{A Banach lattice \texorpdfstring{$X$}{X} for which
\texorpdfstring{$X_{\downarrow0}$}{X downarrow 0} is complete coanalytic}\label{sec:lower-complete}

In this section, we take
\[
E=L_1[0,1],
\qquad
e=\mathbf{1}_{[0,1]}
\]
and let
\[
X=Z_E
\]
be the separable Banach lattice constructed in the previous section. We  define a finite-rank element $d_m\in X_+$ by
\[
(d_m)_\tom=e
\]
and, for $s=(n)\concat t$,
\[
(d_m)_s=
\begin{cases}
0, & n<m,\\
e, & n\geqslant m.
\end{cases}
\]
The following lemma records the basic properties of $(d_m)$.
\begin{lemme}\label{lem:dead-sequence}
\leanformalization{\leanstatement{ThmBC/Lower/Continuity.lean}{56}{dead_sequence_template}}
For every $m$, $d_m\in X_+$. Moreover,
\[
d_1\geqslant d_2\geqslant\cdots\geqslant0
\]
and $\inf_md_m=0$ in $X$.
\end{lemme}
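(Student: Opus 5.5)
First I will check that each $d_m$ is finite-rank, so that it lies in $\ku S_E\subseteq Z_E=X$. For $m=1$, $d_1$ is constantly $e$, and $F=\{\tom\}$ witnesses that it is finite-rank. For $m\geqslant2$, take $F=\{\tom,(1),\ldots,(m-1)\}$. Above each $(n)$ with $n<m$ the subtree is constantly $0=(d_m)_{(n)}$. For every $(n)\notin F$, i.e.\ $n\geqslant m$, the subtree above $(n)$ is constantly $e=(d_m)_\tom$. The norm is finite since the values lie in $\{0,e\}$. Positivity is clear, because every coordinate is $0$ or $e\geqslant0$. Monotonicity is also clear: at $\tom$ both $d_m$ and $d_{m+1}$ equal $e$. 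At a node $(n)\concat t$ the value $e$ in $d_{m+1}$ requires $n\geqslant m+1\geqslant m$, and then $d_m$ also has value $e$ there.

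The substantive part is $\inf_m d_m=0$ in $X$; the infimum must be taken in $X$, not in $V_E$. I will take $h\in X$ with $0\leqslant h\leqslant d_m$ for all $m$ and show $h=0$. Since the order in $X$ is coordinatewise, for every $s=(n)\concat t$ we get $0\leqslant h_s\leqslant (d_{n+1})_s=0$, so $h_s=0$ at every nonempty node. It remains to show $h_\tom=0$. Here I use Lemma~\ref{lem:child-convergence-template}: since $h\in Z_E$, we have $h_{(n)}\to h_\tom$ in $E$. But $h_{(n)}=0$ for all $n$, hence $h_\tom=0$ and $h=0$.

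I do not expect a real obstacle. The one point requiring care is that the coordinate at the root cannot be controlled by the order bounds alone, because $(d_m)_\tom=e$ for every $m$. Indeed, in $V_E$ the infimum of the $d_m$ is the nonzero element supported at $\tom$ with value $e$. That element is not in $Z_E$, and the continuity property of Lemma~\ref{lem:child-convergence-template} is exactly what excludes it.
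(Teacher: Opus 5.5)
Your proposal is correct and follows essentially the same route as the paper: the order bounds force every non-root coordinate of a positive lower bound to vanish, and Lemma~\ref{lem:child-convergence-template} (continuity at the root along children) then kills $h_\tom$. The only additions are your explicit finite-rank witnesses for $d_m$, which the paper takes for granted, and your correct side remark that the infimum in $V_E$ is $e$ at $\tom$ and $0$ elsewhere, an element lying outside $Z_E$.
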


\begin{proof}
 The monotonicity and positivity assertions are clear. Suppose next that $0\leqslant x\in X$ and $x\leqslant d_m$ for all $m$. Fix a first-level coordinate $n$. If $m>n$, then the entire $n$th first-level subtree of $d_m$ is zero, and hence the same subtree of $x$ is zero. Thus, $x_n=0$ for every first-level node $n$. By Lemma~\ref{lem:child-convergence-template}, $x_n\to x_\tom$ in $E$, so $x_\tom=0$. Since all other coordinates have already been shown to be zero, we may conclude that $x=0$.
\end{proof}

Let $R\in\Tr$. For each $m$, define an element $z_m^R\in V_E$ as follows: If $s\in R$, set
\[
(z_m^R)_s=e.
\]
If $s\notin R$, let $q\concat n$ be the first initial segment of $s$ outside $R$, with $q\in R$, and write
\[
s=q\concat n\concat r.
\]
Then, set
\[
(z_m^R)_s=(d_m)_r.
\]
In other words, the indexed sequence defining this element stays constantly $e$ while it remains inside $R$, and at the first exit, a copy of the decreasing sequence $(d_m)$ is pasted into the exiting subtree. The basic properties of the elements $z_m^R$ are described in the following lemma.

\begin{lemme}\label{lem:z-lower-in-X}
\leanformalization{\leanstatement{ThmBC/Lower/Continuity.lean}{70}{z_lower_in_X_template}}
For every $R\in\Tr$ and every $m$, we have $z_m^R\in X_+$. Moreover,
\[
z_1^R\geqslant z_2^R\geqslant\cdots\geqslant0.
\]
\end{lemme}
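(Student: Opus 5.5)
The plan is to verify the hypotheses of Lemma~\ref{lem:eventually-constant-template} for $u=z_m^R$ with $E=L_1[0,1]$, and then read off monotonicity and positivity coordinatewise.

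First, boundedness. Every coordinate of $z_m^R$ is either $e$ or a coordinate $(d_m)_r$, and each of these is $0$ or $e$. Hence $\sup_s\norm{(z_m^R)_s}_E\leqslant\norm{e}_{L_1}=1$. Since $\omega(s)\leqslant1$, the element $z_m^R$ lies in $V_E$.

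Second, the eventual-constancy condition. I will fix $s$ and find $N_s$ with $(z_m^R)_{s\concat n}=(z_m^R)_s$ for $n\geqslant N_s$, splitting into three cases.
\begin{enumerate}
\item If $s\in R$, then $(z_m^R)_s=e$. For each $n$, either $s\concat n\in R$, giving value $e$, or $s\concat n$ is the first exit, with $q=s$ and $r=\tom$, giving value $(d_m)_\tom=e$. So $N_s=1$ works.
\item If $s\notin R$ and $s=q\concat n_0$ is itself the first exit, then $(z_m^R)_s=(d_m)_\tom=e$. For the children we have $(z_m^R)_{s\concat n}=(d_m)_{(n)}$, which equals $e$ once $n\geqslant m$. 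So $N_s=m$ works.
\item If $s=q\concat n_0\concat r$ with $r\ne\tom$, then $(z_m^R)_{s\concat n}=(d_m)_{r\concat n}$. Since $r$ starts with a first-level entry, $d_m$ is constant on the subtree above its first-level node, so $(d_m)_{r\concat n}=(d_m)_r$ for all $n$.
\end{enumerate}
Lemma~\ref{lem:eventually-constant-template} then gives $z_m^R\in Z_E=X$. Positivity is immediate because all coordinates are $0$ or $e$.

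Third, monotonicity. It is checked coordinatewise. On $R$ all coordinates equal $e$ for every $m$. Off $R$ we have $(z_m^R)_s=(d_m)_r\geqslant(d_{m+1})_r$ by Lemma~\ref{lem:dead-sequence}. Since the order on $X$ is coordinatewise, this gives $z_1^R\geqslant z_2^R\geqslant\cdots\geqslant0$.

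The only point needing care is the case bookkeeping in the second step. The first-exit decomposition $s=q\concat n\concat r$ must be applied to $s\concat n$ and compared with the decomposition of $s$ itself, including the boundary case where $s\concat n$ is the exit node. The rest is routine.
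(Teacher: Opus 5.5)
Your proof is correct and follows essentially the same route as the paper's. You bound $z_m^R$ by the constant sequence $e$, check eventual constancy along children separately for $s\in R$ and $s\notin R$, apply Lemma~\ref{lem:eventually-constant-template}, and get monotonicity from Lemma~\ref{lem:dead-sequence}. Your three-case bookkeeping simply spells out the paper's brief remark that, off $R$, the element is ``governed by the finite-rank element $d_m$''.
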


\begin{proof}
The element $z_m^R$ is bounded by the constant indexed sequence with value $e$. If $s\in R$, then every child $s\concat n$ has value $e$, either because it still lies in $R$ or because the first exit occurs at that child and $(d_m)_\tom=e$. If $s\notin R$, then, above $s$, the element is governed by the finite-rank element $d_m$ and is therefore eventually constant along children. Lemma~\ref{lem:eventually-constant-template} gives $z_m^R\in X_+$. The monotonicity in $m$ follows from Lemma~\ref{lem:dead-sequence}.
\end{proof}

\begin{lemme}\label{lem:z-lower-continuous}
\leanformalization{
  \leanstatement{ThmBC/Lower/Continuity.lean}{363}{z_lower_continuous_template} and\\
  \leanstatement{ThmBC/Lower/Continuity.lean}{396}{z_lower_sequence_continuous_template}}
For every fixed $m$, the map
\[
\Tr\longrightarrow X,
\qquad
R\longmapsto z_m^R,
\]
is continuous. Consequently, the map $R\mapsto (z_m^R)_m$ is continuous from $\Tr$ to $X^\N$.
\end{lemme}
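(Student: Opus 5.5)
Fix $m$, a tree $R\in\Tr$ and $\eps>0$. The plan is to find a finite set $G\subseteq\N^{<\N}$ such that every $R'\in\Tr$ with $R'\cap G=R\cap G$ satisfies $\norm{z_m^{R'}-z_m^R}_{V_E}\leqslant\eps$. Such sets are basic neighbourhoods of $R$ in $\Tr\subseteq 2^{\N^{<\N}}$, so this gives continuity of $R\mapsto z_m^R$. The second assertion then follows at once, since $X^\N$ carries the product topology.

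The key observation is that every coordinate of $z_m^R$ lies in $\{0,e\}$. Hence $\norm{(z_m^{R'})_t-(z_m^R)_t}_E\leqslant\norm e_{L_1}=1$ for every $t$, and it suffices to control which nodes $t$ can carry a difference.

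\begin{itemize}
\item The value $(z_m^R)_t$ depends only on $R\cap\{\text{initial segments of }t\}$. It is determined by the first exit $q\concat n$ of $t$ from $R$, or by the fact that there is no exit, together with the value $(d_m)_r$ of the remainder.
\item By the computation in the proof of Lemma~\ref{lem:eventually-constant-template}, the set $\{t\del \omega(t)\geqslant\eps\}$ is finite, so we may take $G=\{t\del\omega(t)\geqslant\eps\}$. The weight $\omega$ decreases along extensions, since each step multiplies by $\gamma\lambda_{\last}<1$, and $\omega(\tom)=1$. Consequently $G$ is downward closed, and every initial segment of a node in $G$ again lies in $G$.
\item If $R'\cap G=R\cap G$, then every $t\in G$ has the same initial segments in $R$ and in $R'$, so $(z_m^{R'})_t=(z_m^R)_t$.
\item For $t\notin G$, the weighted difference is at most $\omega(t)\cdot 1<\eps$.
\end{itemize}

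Taking the supremum over all $t$ gives $\norm{z_m^{R'}-z_m^R}_{V_E}\leqslant\eps$, which is what we wanted.

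The only point I expect to need care is the finiteness of $\{\omega\geqslant\eps\}$, including at depth one, where $\omega(n)=\gamma$ for every $n$. There are infinitely many first-level nodes, all of weight $\gamma$, so this set is infinite whenever $\eps\leqslant\gamma$. I would therefore refine the argument at the root.

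For a first-level node $n$, the difference at $n$ itself and in the subtree above it is governed by whether $(n)\in R$ and by deeper nodes. If $(n)\notin R$, the value at $(n)$ is $(d_m)_\tom=e$; if $(n)\in R$, the value is $e$ by definition. So the value at depth one is $e$ regardless of $R$, and depth-one nodes never carry a difference.

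Below depth one the bound $B(r)\to0$ from Lemma~\ref{lem:eventually-constant-template} applies. The set $\{t\del |t|\geqslant2,\ \omega(t)\geqslant\eps\}$ is finite, because $\omega(n\concat k)=\gamma^2\lambda_n$ tends to $0$ as $n\to\infty$, and similarly at deeper levels. I would therefore take $G$ to be $\{\tom\}$, together with the finite set of nodes of length at least $2$ and weight at least $\eps$, together with all their initial segments. This $G$ is finite, and the argument above goes through verbatim with this choice.
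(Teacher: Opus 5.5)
There is a genuine gap: your final set $G$ is not finite. By \eqref{eq:weights-child}, $\omega(s\concat k)=\gamma\lambda_{\last(s)}\omega(s)$ does not depend on $k$. Your own formula $\omega(n\concat k)=\gamma^2\lambda_n$ shows this: it tends to $0$ in $n$ but is constant in $k$. So whenever $\gamma^2\lambda_n\geqslant\eps$ (e.g.\ $n=1$ and $\eps\leqslant\gamma^2/2$), the entire fan $\{n\concat k : k\in\N\}$ lies in $\{t : |t|\geqslant2,\ \omega(t)\geqslant\eps\}$. The same happens below every node of sufficiently large weight.

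The obstruction you found at the root is therefore not special to depth one; it recurs at every level. With $G$ infinite, $\{R' : R'\cap G=R\cap G\}$ is not a neighbourhood of $R$, so continuity does not follow. The proof of Lemma~\ref{lem:eventually-constant-template} does not show that $\{t : \omega(t)\geqslant\eta\}$ is finite. What it shows is that $\{r\neq\tom : B(r)\geqslant\eta\}$ is finite, where $B(r)=\omega(r\concat k)$ is the common weight of the children of $r$. This quantity depends on $\last(r)$.

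The repair is to apply your depth-one observation at every node. If all proper initial segments of $t$ lie in $R$, then $(z_m^R)_t=e$, whether $t\in R$ or $t$ is the first exit, because $(d_m)_\tom=e$. Hence $(z_m^R)_t$ depends only on which \emph{proper} initial segments of $t$ lie in $R$.

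Consequently, suppose $R$ and $R'$ first differ along $t$ at a node $r\neq\tom$. Then the two elements agree at $r$ itself and can differ only at proper extensions of $r$, where the weight is at most $B(r)$. So it suffices to require agreement on the finite set $\{r\neq\tom : B(r)\geqslant\eps\}$. In your language, put into $G$ the parents of the heavy nodes (a finite, downward closed set) rather than the heavy nodes themselves. This is exactly the paper's argument.
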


\begin{proof}
Let $B(r)$ be as in \eqref{eq:B-r}. For every $\eps>0$, the set
\[
A_\eps=\Mgd{r\ne\tom}{B(r)\geqslant\eps}
\]
is finite. Suppose $R$ and $R'$ agree on $A_\eps$. If $z_m^R$ and $z_m^{R'}$ differ at a node $t$, let $r$ be the first initial segment at which $R$ and $R'$ differ. The root cannot be such an $r$. Also, the two elements agree at $r$ itself, since the value at a first exit is $(d_m)_\tom=e$, which is the value inside the tree. Thus, any difference occurs strictly below $r$. Since $r\notin A_\eps$ and all values are either $0$ or $e$, with $\norm e=1$, we have
\[
\omega(t)\norm{(z_m^R)_t-(z_m^{R'})_t}_E\leqslant B(r)<\eps.
\]
Taking the supremum over $t$ proves continuity.
\end{proof}

Define
\[
\Theta:\Tr\longrightarrow X^\N,
\qquad
\Theta(T)=(z_m^{\widehat T})_{m=1}^{\infty}.
\]
By Lemmas~\ref{lem:fattening},~\ref{lem:z-lower-in-X} and~\ref{lem:z-lower-continuous}, the map $\Theta$ is continuous and $\Theta(T)\in X_\downarrow$ for every $T\in\Tr$. %For such a positive decreasing sequence, belonging to $X_{\downarrow0}$ is the same as having no non-zero positive lower bound: any lower bound $y$ gives the positive lower bound $y^+$.

\begin{lemme}\label{lem:lower-IF}
\leanformalization{\leanstatement{ThmBC/Lower/Reduction.lean}{157}{lower_IF_template}}
If $T\notin\WF$, then $\Theta(T)\notin X_{\downarrow0}$.
\end{lemme}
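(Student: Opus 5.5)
We will exhibit a nonzero positive lower bound $y\in X$ for all $z_m^{\widehat T}$. Let $b=(b_i)_{i=1}^\infty$ be an infinite branch through $T$. By Lemma~\ref{lem:fattening}, $R=\widehat T$ contains the subtree $L_b$, and every node of $L_b$ has all sufficiently large children in $L_b$. Define $y=(y_s)$ by
\[
y_s=\begin{cases} e, & s\in L_b,\\ 0, & s\notin L_b.\end{cases}
\]
Note $\tom\in L_b$, so $y_\tom=e\neq0$ and hence $y\ne0$.

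First we check $y\in X$ using Lemma~\ref{lem:eventually-constant-template}. The values are bounded by $\norm e=1$, so the sup is finite and $y\in V_E$. For the eventual constancy along children there are two cases. If $s\in L_b$, then $s\concat n\in L_b$ for all large $n$ by Lemma~\ref{lem:fattening}, so $y_{s\concat n}=e=y_s$ eventually. If $s\notin L_b$, then no extension of $s$ lies in $L_b$, since $L_b$ is downward closed, so $y_{s\concat n}=0=y_s$ for all $n$. Hence $y\in X_+$.

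Next we check $y\leqslant z_m^R$ for every $m$. At nodes $s\notin L_b$ we have $y_s=0\leqslant(z_m^R)_s$. At nodes $s\in L_b$ we have $s\in R$ because $L_b\subseteq\widehat T=R$, so $(z_m^R)_s=e=y_s$. Thus $0<y\leqslant z_m^R$ for all $m$, so $\inf_m z_m^R\neq0$ and $\Theta(T)\notin X_{\downarrow0}$.

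The only point needing care is membership of $y$ in the closed sublattice $X=Z_E$ rather than merely in $V_E$. This is exactly where the "cofinitely many children" property of $L_b$ from Lemma~\ref{lem:fattening} is needed, and it is why the reduction uses the fattening $\widehat T$ rather than $T$ itself. Without fattening, a single branch would not be eventually constant along children, and such an element would fail Lemma~\ref{lem:child-convergence-template}.
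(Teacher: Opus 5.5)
Your proposal is correct and is essentially the paper's own proof. The paper uses the same element (equal to $e$ exactly on the nodes dominating the branch, i.e.\ on $L_b$, and $0$ elsewhere) and places it in $X$ via Lemma~\ref{lem:eventually-constant-template} with the same two-case eventual-constancy check. It then concludes $0<y\leqslant z_m^{\widehat T}$ from the inclusion $L_b\subseteq\widehat T$.
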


\begin{proof}
Let $a=(a_1,a_2,\ldots)$ be an infinite branch through $T$ and put $R=\widehat T$. Define $y^a\in V_E$ by
\[
(y^a)_s=
\begin{cases}
e, & s=(s_1,\ldots,s_k)\text{ and }a_i\leqslant s_i\text{ for all }1\leqslant i\leqslant k,\\
0, & \text{otherwise}.
\end{cases}
\]
At the root this value is $e$, so $y^a\ne0$. The element is bounded and eventually constant along children. Indeed, if $(y^a)_s=e$, then all sufficiently large children still dominate the branch, while if $(y^a)_s=0$, every descendant remains outside the domination cone. Hence, $y^a\in X_+$ by Lemma~\ref{lem:eventually-constant-template}.

Whenever $(y^a)_s=e$, the node $s$ dominates the corresponding initial segment of the branch $a$, and so $s\in\widehat T=R$. Therefore, $(z_m^R)_s=e$ for all $m$. Thus,
\[
0<y^a\leqslant z_m^R
\qquad(m\in\N),
\]
so the decreasing sequence $\Theta(T)$ has a non-zero positive lower bound.
\end{proof}

\begin{lemme}\label{lem:dead-subtree}
\leanformalization{\leanstatement{ThmBC/Lower/Reduction.lean}{209}{dead_subtree_template}}
Let $R\in\Tr$, suppose that $q\in R$ but $q\concat n\notin R$ and let $0\leqslant x\in X$ satisfy
\[
x\leqslant z_m^R\qquad(m\in\N).
\]
Then the entire subtree of $x$ above $q\concat n$ is zero.
\end{lemme}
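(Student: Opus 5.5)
The plan is to mimic the proof of Lemma~\ref{lem:dead-sequence}, transported to the subtree above $q\concat n$. By definition of $z_m^R$, every node $s=q\concat n\concat r$ has first exit from $R$ at $q\concat n$ (since $q\in R$, $q\concat n\notin R$, and $R$ is downward closed, so all proper initial segments of $q\concat n$ lie in $R$). Hence
\[
(z_m^R)_{q\concat n\concat r}=(d_m)_r\qquad(r\in\N^{<\N},\ m\in\N).
\]
So the restriction of $x$ to this subtree, $r\mapsto x_{q\concat n\concat r}$, is dominated coordinatewise by $(d_m)_r$ for every $m$.

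First I treat nonempty $r=(k)\concat t$. For $m>k$, $(d_m)_r=0$, so $0\leqslant x_{q\concat n\concat k\concat t}\leqslant 0$. This gives $x_{q\concat n\concat k\concat t}=0$ for every $k\in\N$ and every $t$, i.e.\ $x$ vanishes at all proper descendants of $q\concat n$.

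It remains to show $x_{q\concat n}=0$, which is where the dominating sequence gives no direct information, since $(d_m)_\tom=e$ for all $m$. I will apply Lemma~\ref{lem:child-convergence-template} with $s=q\concat n$: since $x\in X=Z_E$, we have $x_{q\concat n\concat k}\to x_{q\concat n}$ in $E$ as $k\to\infty$. All the children are zero by the previous step, so $x_{q\concat n}=0$. This settles the whole subtree, including its root.

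The only subtle point is the root node $q\concat n$ itself, handled exactly as the root in Lemma~\ref{lem:dead-sequence}. I should also verify that the first-exit decomposition is the one asserted, which is just downward closedness of $R$. Note that the argument never uses the fattening or any property of $R$ beyond its being a tree, so it applies for every $R\in\Tr$.
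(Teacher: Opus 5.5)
Your proof is correct and follows essentially the same route as the paper: identify $z_m^R$ above $q\concat n$ with a copy of $d_m$, kill all proper descendants using $(d_m)_{(k)\concat t}=0$ for $m>k$, and recover $x_{q\concat n}=0$ from Lemma~\ref{lem:child-convergence-template}. Your explicit check that the first exit occurs at $q\concat n$, via downward closedness of $R$, is a detail the paper leaves implicit.
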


\begin{proof}
On the subtree above $q\concat n$, the element $z_m^R$ is a copy of $d_m$. Hence, for every $r\in\N^{<\N}$,
\[
0\leqslant x_{q\concat n\concat r}\leqslant (d_m)_r
\qquad(m\in\N).
\]
Fix a first-level descendant $j$ inside this copied subtree. If $m>j$, then the whole $j$th first-level subtree of $d_m$ is zero, and so
\[
x_{q\concat n\concat j\concat t}=0
\qquad(t\in\N^{<\N}).
\]
This holds for every $j$. By Lemma~\ref{lem:child-convergence-template}, $x_{q\concat n\concat j}\to x_{q\concat n}$ in $E$, and hence $x_{q\concat n}=0$. Together with the already established vanishing below all children, this proves the assertion.
\end{proof}

\begin{lemme}\label{lem:lower-WF}
\leanformalization{\leanstatement{ThmBC/Lower/Reduction.lean}{427}{lower_WF_template}}
If $T\in\WF$, then $\Theta(T)\in X_{\downarrow0}$.
\end{lemme}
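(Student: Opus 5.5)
We argue by induction on the well-founded tree. Put $R=\widehat T$; by Lemma~\ref{lem:fattening}, $R\in\WF$. Since $\Theta(T)\in X_\downarrow$ is already known, it suffices to show the following: if $0\leqslant x\in X$ satisfies $x\leqslant z_m^R$ for all $m$, then $x=0$. (Any lower bound $y$ gives the positive lower bound $y^+$.) The idea is to show by well-founded induction that $x_s=0$ for every node $s$. The base information comes from the nodes outside $R$, and we then propagate upward to the root using Lemma~\ref{lem:child-convergence-template}.

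First, let $s\notin R$, and let $q\concat n$ be its first initial segment outside $R$. Lemma~\ref{lem:dead-subtree} shows that the whole subtree of $x$ above $q\concat n$ vanishes, so $x_s=0$. Thus $x$ vanishes on the complement of $R$.

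Next we handle the nodes of $R$ by induction on the rank function of $R$. Since $R$ is well-founded, the relation ``$t$ is a proper extension of $s$'' restricted to $R$ is well-founded, so we may use the inductive hypothesis: assume $x_{s\concat n}=0$ for every child $s\concat n$ of $s\in R$. Each child either lies outside $R$, and is handled by the first step, or lies in $R$, and is handled by the hypothesis. Lemma~\ref{lem:child-convergence-template} then gives $x_s=\lim_n x_{s\concat n}=0$ in $E$. The base case is a leaf $s$ of $R$. All of its children lie outside $R$, so they vanish by the first step, and the same limit argument applies. Formally, we let $\mathrm{rk}$ be the usual ordinal rank on the well-founded tree $R$ and use transfinite induction on $\mathrm{rk}(s)$. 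Equivalently, if some $s\in R$ had $x_s\ne0$, we would obtain a child in $R$ with nonzero value, and iterating this would build an infinite branch of $R$, a contradiction.

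Combining the two steps gives $x_s=0$ for all $s$, including $s=\tom$, so $x=0$ and $\inf_m z_m^{\widehat T}=0$. I expect no serious obstacle. The only delicate point is the passage from ``all children vanish'' to ``the parent vanishes'', and this is exactly where membership in $Z_E$ rather than $V_E$ is used, via Lemma~\ref{lem:child-convergence-template}. The contrapositive infinite-branch formulation avoids ordinal ranks if preferred. If some $s\in R$ had $x_s\neq0$, then some child would have nonzero value, since otherwise the limit would be $0$. That child cannot lie outside $R$ by the first step, so we may choose it in $R$ and repeat, producing an infinite branch through $R$.
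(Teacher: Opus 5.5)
Your proposal is correct and follows essentially the same route as the paper. You reduce to a positive lower bound $x$, use Lemma~\ref{lem:dead-subtree} to make $x$ vanish on the subtrees at first exits from $R=\widehat T$, and then propagate the vanishing to the root by well-founded induction on $R$ via Lemma~\ref{lem:child-convergence-template}; the only cosmetic difference is that you handle the complement of $R$ first, whereas the paper inducts directly on ``the whole subtree above $q\in R$ vanishes''.
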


\begin{proof}
Let $R=\widehat T$. By Lemma~\ref{lem:fattening}, $R$ is well-founded. Suppose $0\leqslant x\in X$ and
\[
x\leqslant z_m^R\qquad(m\in\N).
\]
We prove by well-founded induction on $R$ that the whole subtree of $x$ above each node $q\in R$ is zero. Assume the conclusion has been proved for every immediate successor $q\concat n\in R$. Fix $n$. If $q\concat n\in R$, the induction hypothesis implies vanishing above that child. If $q\concat n\notin R$, Lemma~\ref{lem:dead-subtree} gives the same conclusion. Thus, $x_{q\concat n}=0$ for all $n$, and Lemma~\ref{lem:child-convergence-template} gives $x_q=0$. Hence, the whole subtree above $q$ is zero. Applying the conclusion at $q=\tom$ gives $x=0$.
\end{proof}

\begin{proof}[Proof of Theorem \ref{theoremB}]
For the separable Banach lattice $X=Z_{L_1[0,1]}$ constructed above, the map $\Theta:\Tr\to X^\N$ is continuous and satisfies
\[
T\in\WF
\quad\equi\quad
\Theta(T)\in X_{\downarrow0}
\]
by Lemmas~\ref{lem:lower-IF} and~\ref{lem:lower-WF}. Thus, $\WF\leq_{\rm c} X_{\downarrow0}$. Since $\WF$ is $\mathbf{\Pi}^1_1$-complete and $X_{\downarrow0}$ is coanalytic by Proposition~\ref{prop:coanalytic}, the result follows.
\end{proof}

\section{A Banach lattice \texorpdfstring{$Y$}{Y} for which
\texorpdfstring{$Y^{\uparrow\infty}$}{Y upward infinity} is complete coanalytic}\label{sec:upper-complete}

We now take $E=\R$ in the above template and write $Z_\R$ for the resulting separable Banach lattice of scalar indexed sequences. Let
\[
Y=\Mgd{y\in Z_\R}{y_\tom=0}.
\]
Since root evaluation is a continuous lattice homomorphism, $Y$ is a closed Banach sublattice of $Z_\R$, and hence is separable. %by Lemma~\ref{lem:S-template}.

For each $m\in\N$, define $a_m\in Y_+$ by
\[
(a_m)_\tom=0,
\]
and, for $j\in\N$ and $t\in\N^{<\N}$,
\[
(a_m)_{j\concat t}=
\begin{cases}
1, & j\leqslant m,\\
0, & j>m.
\end{cases}
\]
In other words, $a_m$ has root value $0$, its first $m$ first-level subtrees are constantly $1$, and all remaining first-level subtrees are constantly $0$. Note that each $a_m$ is finite-rank, and
\[
0\leqslant a_1\leqslant a_2\leqslant\cdots.
\]

\begin{lemme}\label{lem:local-unbounded}
\leanformalization{\leanstatement{ThmBC/Upper/Basic.lean}{176}{local_unbounded_template}}
The sequence $(a_m)$ is not order bounded in $Y$.
\end{lemme}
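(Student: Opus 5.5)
Suppose, towards a contradiction, that some $y\in Y$ satisfies $a_m\leqslant y$ for all $m$. The plan is to show that $y$ then equals $1$ on every first-level node, which clashes with the continuity property of Lemma~\ref{lem:child-convergence-template} at the root. No upper bound can then exist in $Y$.

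First, fix a first-level node $j\in\N$. For every $m\geqslant j$ we have $(a_m)_j=1$, so $y_j\geqslant 1$ for all $j$. Here we use only that the order on $Y\subseteq Z_\R\subseteq V_\R$ is coordinatewise, so $a_m\leqslant y$ gives $(a_m)_s\leqslant y_s$ at every node $s$.

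Next, since $y\in Y\subseteq Z_\R$, Lemma~\ref{lem:child-convergence-template} applied with $E=\R$ and $s=\tom$ gives $y_j\to y_\tom$ as $j\to\infty$. But $y_\tom=0$ by the definition of $Y$. This contradicts $y_j\geqslant1$ for all $j$.

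There is no serious obstacle. The only point to keep straight is that the bound $y$ must lie in $Y$, hence in the closure $Z_\R$ of the finite-rank elements, so the child-convergence lemma applies. In $V_\R$ itself, the constant sequence equal to $1$ off the root would be an upper bound, and the argument would fail. The normalization $y_\tom=0$ is what forces the contradiction.
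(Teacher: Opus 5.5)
Your proposal is correct and is essentially the paper's proof: take $m\geqslant j$ to get $y_j\geqslant(a_m)_j=1$ for every first-level node $j$, then apply Lemma~\ref{lem:child-convergence-template} at the root to get $y_j\to y_\tom=0$, a contradiction. Your side remark about $V_\R$ is also right; it shows that passing to the closure $Z_\R$ is what makes the child-convergence lemma available.
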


\begin{proof}
Suppose $y\in Y_+$ and $a_m\leqslant y$ for all $m$. For each fixed first-level node $j$, choose $m\geqslant j$. Then $1=(a_m)_j\leqslant y_j$. Hence, $y_j\geqslant1$ for every $j$. However, Lemma~\ref{lem:child-convergence-template} gives $y_j\to y_\tom=0$, a contradiction.
\end{proof}

Let $R\in\Tr$. For every $m$, define $u_m^R\in V_\R$ as follows. If $s\in R$, set
\[
(u_m^R)_s=0.
\]
If $s\notin R$, let $q\concat n$ be the first initial segment of $s$ outside $R$, with $q\in R$, and write $s=q\concat n\concat r$. Then set
\[
(u_m^R)_s=(a_m)_r.
\]
This definition is in analogy with the construction in the previous section, with the indexed sequence being $0$ while it remains inside $R$, and then having a copy of the bad sequence $(a_m)$ pasted in at the first exit.

\begin{lemme}\label{lem:u-upper-in-Y}
\leanformalization{\leanstatement{ThmBC/Upper/Continuity.lean}{168}{u_upper_in_Y_template}}
For every $R\in\Tr$ and every $m$, $u_m^R\in Y_+$. Moreover,
\[
0\leqslant u_1^R\leqslant u_2^R\leqslant\cdots.
\]
\end{lemme}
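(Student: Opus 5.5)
The argument parallels the proof of Lemma~\ref{lem:z-lower-in-X}. The plan is to show $u_m^R\in Z_\R$ using Lemma~\ref{lem:eventually-constant-template}, and then to check the root value and the monotonicity directly.

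\emph{Boundedness.} Every coordinate of $u_m^R$ is either $0$ or $1$. Hence $\sup_s|(u_m^R)_s|\leqslant1$ and $u_m^R\in V_\R$ with nonnegative coordinates.

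\emph{Eventual constancy along children.} We need, for each node $s$, an index $N_s$ with $(u_m^R)_{s\concat n}=(u_m^R)_s$ for all $n\geqslant N_s$. There are three cases.

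If $s\in R$, then $(u_m^R)_s=0$. Each child $s\concat n$ either lies in $R$, giving value $0$, or is itself a first exit. In the latter case $q\concat n=s\concat n$ with $r=\tom$, and the value is $(a_m)_\tom=0$. So all children agree with $s$, and we may take $N_s=1$.

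If $s\notin R$, write $s=q\concat n\concat r$ with first exit $q\concat n$. Every child then has the form $s\concat k=q\concat n\concat(r\concat k)$ with the same first exit, so $(u_m^R)_{s\concat k}=(a_m)_{r\concat k}$. Since $a_m$ is finite-rank, its values are constant along children for all large $k$ except possibly at the root. At the root, $(a_m)_{(k)}=0=(a_m)_\tom$ once $k>m$. For $r\neq\tom$, $a_m$ is constant on each first-level subtree, so $(a_m)_{r\concat k}=(a_m)_r$ for every $k$. In both sub-cases $N_s$ exists.

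Lemma~\ref{lem:eventually-constant-template} then gives $u_m^R\in Z_\R$. The root is always in $R$, so $(u_m^R)_\tom=0$ and $u_m^R\in Y$. Positivity is coordinatewise, hence $u_m^R\in Y_+$.

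\emph{Monotonicity.} This is coordinatewise. On $R$ all values are $0$. Off $R$ we have $(u_m^R)_s=(a_m)_r\leqslant(a_{m+1})_r=(u_{m+1}^R)_s$, using $a_m\leqslant a_{m+1}$.

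The only delicate point is the first-exit case $r=\tom$, where consistency depends on the root value $(a_m)_\tom=0$ matching the value $0$ inside $R$. That value is exactly what the definition of $a_m$ provides.
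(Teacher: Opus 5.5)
Your proposal is correct and is essentially the paper's proof. You use the bounded $\{0,1\}$-valued coordinates, eventual constancy along children (inside $R$, at a first exit via $(a_m)_\tom=0$, and within the pasted copy of $a_m$), then Lemma~\ref{lem:eventually-constant-template}, the root value $0$, and coordinatewise monotonicity from $a_m\leqslant a_{m+1}$; you merely make explicit the sub-cases $r=\tom$ and $r\neq\tom$ that the paper compresses into ``governed by the finite-rank element $a_m$'', and your caveat ``except possibly at the root'' is unnecessary (finite rank gives eventual constancy at every node) but harmless, since you check that case directly.
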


\begin{proof}
The element $u_m^R$ takes only the values $0$ and $1$. If $s\in R$, then all children have value $0$. Indeed, either the child is still in $R$, or the first exit occurs there and $(a_m)_\tom=0$. If $s\notin R$, the values on the subtree above $s$ are governed by the finite-rank element $a_m$ and hence are eventually constant along children. Lemma~\ref{lem:eventually-constant-template} gives $u_m^R\in Z_\R$, and since $(u_m^R)_\tom=0$, we have $u_m^R\in Y$. Positivity and monotonicity are clear.
\end{proof}

\begin{lemme}\label{lem:u-upper-continuous}
\leanformalization{\leanstatement{ThmBC/Upper/Continuity.lean}{178}{u_upper_continuous_template}}
For every fixed $m$, the map
\[
\Tr\longrightarrow Y,
\qquad
R\longmapsto u_m^R,
\]
is continuous. Consequently, $R\mapsto(u_m^R)_m$ is continuous from $\Tr$ to $Y^\N$.
\end{lemme}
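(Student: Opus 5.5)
The plan is to copy the proof of Lemma~\ref{lem:z-lower-continuous} almost verbatim, with $a_m$ in place of $d_m$ and $0$ in place of $e$. Fix $m$ and $\eps>0$, and let $B(r)$ be as in \eqref{eq:B-r}. As shown in the proof of Lemma~\ref{lem:eventually-constant-template}, the set
\[
A_\eps=\Mgd{r\ne\tom}{B(r)\geqslant\eps}
\]
is finite. Hence the set of trees $R'$ that agree with a given $R$ on $A_\eps$ is a basic open neighbourhood of $R$ in $\Tr$. It therefore suffices to show that any two trees $R,R'$ agreeing on $A_\eps$ satisfy $\norm{u_m^R-u_m^{R'}}_{V_\R}\leqslant\eps$.

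Suppose $(u_m^R)_t\ne(u_m^{R'})_t$ at some node $t$. Let $r$ be the shortest initial segment of $t$ on which membership in $R$ and in $R'$ differs; such an $r$ exists, since otherwise $t$ has the same first-exit decomposition (or lies in both trees) and the two values coincide. The root lies in every tree, so $r\ne\tom$. All proper initial segments of $r$ lie in both trees or in neither, and on one side $r$ is the first exit. Values are only determined by the first exit, so the two elements agree on all proper initial segments of $r$.

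At $r$ itself the elements also agree. On the side where $r\in$ tree, the value is $0$. On the side where $r$ is the first exit, the value is $(a_m)_\tom=0$. This is exactly why the root value of $a_m$ was chosen to be $0$, mirroring $(d_m)_\tom=e$ in the previous section. Hence $t$ is a proper extension of $r$, and so $\omega(t)\leqslant B(r)$. Moreover $r\notin A_\eps$, since $R$ and $R'$ agree on $A_\eps$, which gives $B(r)<\eps$.

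All values of $u_m^R$ lie in $\{0,1\}$, so $|(u_m^R)_t-(u_m^{R'})_t|\leqslant1$ and
\[
\omega(t)\,\big|(u_m^R)_t-(u_m^{R'})_t\big|\leqslant B(r)<\eps .
\]
Taking the supremum over $t$ gives the norm estimate, and hence continuity of $R\mapsto u_m^R$ into $Y$. Here $Y$ carries the norm inherited from $V_\R$, and each $u_m^R$ lies in $Y$ by Lemma~\ref{lem:u-upper-in-Y}.

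The final assertion is then formal: $Y^\N$ carries the product topology, and a map into a product is continuous exactly when each coordinate map is.

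The only step requiring real care is the argument that the two elements agree at and above the first disagreement node $r$. This rests on the first-exit decomposition being unchanged above $r$ and on the root value $(a_m)_\tom=0$. Once that is checked, the weight bound and the finiteness of $A_\eps$ are already supplied by the proof of Lemma~\ref{lem:eventually-constant-template}.
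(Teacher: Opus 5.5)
Your proposal is correct and is essentially the paper's proof. Like the paper, you transfer the argument of Lemma~\ref{lem:z-lower-continuous}: the two elements agree at the first disagreement node $r$ because $(a_m)_\tom=0$, and any discrepancy strictly beyond $r$ is at most $B(r)<\eps$, since all values lie in $\{0,1\}$ and only finitely many nonempty $r$ satisfy $B(r)\geqslant\eps$.
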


\begin{proof}
The proof is identical to Lemma~\ref{lem:z-lower-continuous}. If two trees first differ at a node $r$, then the two indexed sequences agree at $r$ itself, because the value inside the tree is $0$ and the value at a first exit is $(a_m)_\tom=0$. Any difference occurs strictly below $r$, where the largest possible weight is $B(r)$. Since all values are in $\{0,1\}$ and the sets $\{r\ne\tom\del B(r)\geqslant\eps\}$ are finite, agreement on such a finite set forces the two elements to be within $\eps$ in norm.
\end{proof}

Define
\[
\Phi:\Tr\longrightarrow Y^\N,
\qquad
\Phi(T)=(u_m^{\widehat T})_{m=1}^{\infty}.
\]
By Lemmas~\ref{lem:fattening},~\ref{lem:u-upper-in-Y}, and~\ref{lem:u-upper-continuous}, the map $\Phi$ is continuous and $\Phi(T)\in Y^\uparrow$ for all $T\in\Tr$. %For such a positive increasing sequence, belonging to $Y^{\uparrow\infty}$ is the same as having no upper bound; any upper bound is automatically positive.

\begin{prop}\label{prop:upper-IF}
\leanformalization{\leanstatement{ThmBC/Upper/Reduction.lean}{24}{upper_IF_template}}
If $T\notin\WF$, then $\Phi(T)\notin Y^{\uparrow\infty}$.
\end{prop}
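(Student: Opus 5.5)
The plan is to exhibit an explicit upper bound in $Y$ for the increasing sequence $\Phi(T)=(u_m^R)_m$, where $R=\widehat T$ and $a=(a_1,a_2,\ldots)$ is an infinite branch through $T$. Since the $u_m^R$ take only the values $0$ and $1$, the natural candidate is an indicator-type element. It should equal $1$ wherever some $u_m^R$ is nonzero, it should vanish at the root, and it should be eventually constant along children so that Lemma~\ref{lem:eventually-constant-template} applies.

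Nonzero values of $u_m^R$ occur only strictly above first exits $q\concat n$ with $q\in R$ and $q\concat n\notin R$. So I would define $y_s=1$ if $s\notin R$, and $y_s=0$ if $s\in R$. Then $u_m^R\leqslant y$ for all $m$ trivially, and $y_\tom=0$ since $\tom\in R$. The issue is membership in $Z_\R$. For $s\notin R$ all descendants are outside $R$, so the children are constantly $1$. For $s\in R$ we need the children $s\concat n$ to be eventually all in $R$ or eventually all outside $R$. This fails in general, so the actual bound must be modified using the branch.

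The modification uses Lemma~\ref{lem:fattening}: $L_b$ (with $b=a$) is a subtree of $R$ in which every node has all sufficiently large children. I would set $y_s=0$ for $s\in L_a$ and $y_s=1$ for $s\notin L_a$. This dominates each $u_m^R$, because $u_m^R$ vanishes on $R\supseteq L_a$ and is at most $1$ everywhere. At the root, $\tom\in L_a$, so $y_\tom=0$. Eventual constancy along children also holds. If $s\in L_a$, then $s\concat n\in L_a$ for all large $n$, so the children are eventually $0=y_s$. If $s\notin L_a$, then no extension lies in $L_a$, since $L_a$ is downward closed, so the children are constantly $1=y_s$. The values are bounded by $1$, so Lemma~\ref{lem:eventually-constant-template} gives $y\in Z_\R$, and hence $y\in Y_+$.

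Therefore $(u_m^R)_m$ is order bounded by $y$, that is, $\Phi(T)\notin Y^{\uparrow\infty}$. The only delicate step is the one above: the naive bound given by the complement of $R$ need not lie in $Z_\R$, and replacing $R$ by the cone $L_a$ coming from the branch is what produces eventual constancy.
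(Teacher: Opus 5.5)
Your proof is correct and is essentially the paper's argument. You use the same bound $y^b=\mathbf 1_{\N^{<\N}\setminus L_b}$ and verify it the same way: domination because $u_m^R$ vanishes on $R\supseteq L_b$ and takes values in $\{0,1\}$, root value $0$, eventual constancy along children, and then Lemma~\ref{lem:eventually-constant-template}.

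One small correction to your motivational aside. For $R=\widehat T$ the set $\{n : s\concat n\in R\}$ is always empty or cofinite, so the dichotomy you say "fails in general" actually always holds. The real defect of the naive bound $\mathbf 1_{\N^{<\N}\setminus R}$ is at a node $s\in R$ with no children in $R$: there the children all take the value $1$ while the value at $s$ is $0$, so by Lemma~\ref{lem:child-convergence-template} the naive bound cannot lie in $Z_\R$.
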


\begin{proof}
Let $b=(b_i)_{i=1}^{\infty}$ be an infinite branch through $T$ and put $R=\widehat T$. Define
\[
L_b=\Mgd{(n_1,\ldots,n_k)}{b_i\leqslant n_i\text{ for all }1\leqslant i\leqslant k}.
\]
Then $L_b\subseteq R$, and if $s\in L_b$, then $s\concat n\in L_b$ for all sufficiently large $n$.

Define a scalar indexed sequence $y^b$ by
\[
(y^b)_s=
\begin{cases}
0, & s\in L_b,\\
1, & s\notin L_b.
\end{cases}
\]
This indexed sequence is bounded and eventually constant along children, since if $s\in L_b$, sufficiently large children remain in $L_b$, while if $s\notin L_b$, every descendant is outside $L_b$. Hence, $y^b\in Z_\R$ by Lemma~\ref{lem:eventually-constant-template}. Since $\tom\in L_b$, we have $(y^b)_\tom=0$, so $y^b\in Y_+$.

If $s\in L_b$, then $s\in R$ and $(u_m^R)_s=0=(y^b)_s$. If $s\notin L_b$, then $(y^b)_s=1$ and $u_m^R$ takes only the values $0$ and $1$. Thus, $u_m^R\leqslant y^b$ for all $m$, so $\Phi(T)$ is order bounded.
\end{proof}

\begin{prop}\label{prop:upper-WF}
\leanformalization{\leanstatement{ThmBC/Upper/Reduction.lean}{94}{upper_WF_template}}
If $T\in\WF$, then $\Phi(T)\in Y^{\uparrow\infty}$.
\end{prop}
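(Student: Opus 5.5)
We argue by contradiction, mirroring the proof of Lemma~\ref{lem:lower-WF}. Let $R=\widehat T$, which is well-founded by Lemma~\ref{lem:fattening}. Since $\Phi(T)\in Y^\uparrow$ is already known, it suffices to show that no $y\in Y$ satisfies $u_m^R\leqslant y$ for all $m$. Suppose such a $y$ exists; it is automatically positive since $u_m^R\geqslant0$. The goal is to show that $y_q\geqslant1$ for every $q\in R$. Applied at $q=\tom$, this contradicts $y_\tom=0$, which is part of the definition of $Y$.

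The first step is a local version of Lemma~\ref{lem:local-unbounded}, playing the role of Lemma~\ref{lem:dead-subtree}. Suppose $q\in R$ and $q\concat n\notin R$. On the subtree above $q\concat n$, the element $u_m^R$ is a copy of $a_m$. Thus, for each first-level index $j$ and each $m\geqslant j$, we get $y_{q\concat n\concat j}\geqslant (a_m)_j=1$. By Lemma~\ref{lem:child-convergence-template}, $y_{q\concat n\concat j}\to y_{q\concat n}$ as $j\to\infty$, and hence $y_{q\concat n}\geqslant1$. Note that the value $(u_m^R)_{q\concat n}=(a_m)_\tom=0$ gives no information here. The bound comes from the limit, exactly as in Lemma~\ref{lem:local-unbounded}.

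Next we run a well-founded induction on $R$, proving $y_q\geqslant1$ for all $q\in R$. Fix $q\in R$ and assume the claim holds for every child $q\concat n\in R$. For each $n$, either $q\concat n\in R$, and then $y_{q\concat n}\geqslant1$ by the induction hypothesis, or $q\concat n\notin R$, and then $y_{q\concat n}\geqslant1$ by the first step. Thus $y_{q\concat n}\geqslant1$ for all $n$, and Lemma~\ref{lem:child-convergence-template} gives $y_q=\lim_n y_{q\concat n}\geqslant1$. Well-founded induction applies because $R$ has no infinite branch. Equivalently, if the set of $q\in R$ with $y_q<1$ were nonempty, one could build an infinite branch inside it.

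Applying the claim at $q=\tom\in R$ gives $y_\tom\geqslant1$, contradicting $y_\tom=0$. Hence $\Phi(T)$ is not order bounded, so $\Phi(T)\in Y^{\uparrow\infty}$. The only point needing care is the first step: the lower bound must come from the limit along grandchildren, because the pasted copy of $a_m$ has value $0$ at its root. After that, the induction is routine.
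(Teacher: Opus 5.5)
Your proposal is correct and follows essentially the same route as the paper. Both arguments use a well-founded induction on $R=\widehat T$ showing $y_q\geqslant1$. At a first exit $q\concat n$ the bound comes from $y_{q\concat n\concat j}\geqslant(a_m)_j=1$ and child convergence. A second application of child convergence at $q$, followed by evaluation at the root, contradicts $y_\tom=0$.
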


\begin{proof}
Let $R=\widehat T$. By Lemma~\ref{lem:fattening}, $R$ is well-founded. Suppose, towards a contradiction, that there is $y\in Y_+$ with
\[
u_m^R\leqslant y
\qquad(m\in\N).
\]
We prove by well-founded induction on $R$ that
\[
y_q\geqslant1
\qquad(q\in R).
\]
Fix $q\in R$ and assume the assertion has been proved for every immediate successor $q\concat n\in R$. Let $n\in\N$. If $q\concat n\in R$, the induction hypothesis gives $y_{q\concat n}\geqslant1$. If $q\concat n\notin R$, then the first exit from $R$ occurs at $q\concat n$. For every $j\in\N$, choosing $m\geqslant j$ gives
\[
(u_m^R)_{q\concat n\concat j}=(a_m)_j=1,
\]
and hence $y_{q\concat n\concat j}\geqslant1$. Lemma~\ref{lem:child-convergence-template}, applied at $q\concat n$, yields $y_{q\concat n}\geqslant1$.

Thus, $y_{q\concat n}\geqslant1$ for all $n$. Applying Lemma~\ref{lem:child-convergence-template} at $q$ gives $y_q\geqslant1$. This completes the induction. At the root, we get $y_\tom\geqslant1$, contradicting the defining condition $y_\tom=0$ for $Y$. Therefore, $\Phi(T)$ is not order bounded.
\end{proof}

\begin{thm}\label{thm:upper-complete}
\leanformalization{\leanstatement{ThmBC/Upper/Upper.lean}{350}{upper_complete_template}}
There is a separable Banach lattice $Y$ for which $Y^{\uparrow\infty}$ is $\mathbf{\Pi}^1_1$-complete.
\end{thm}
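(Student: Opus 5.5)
The plan is to assemble the pieces already established in this section, exactly as in the proof of Theorem~\ref{theoremB}. Take $Y=\Mgd{y\in Z_\R}{y_\tom=0}$. This is a closed Banach sublattice of $Z_\R$, and $Z_\R$ is separable by Lemma~\ref{lem:S-template}, so $Y$ is separable. By Proposition~\ref{prop:coanalytic}, $Y^{\uparrow\infty}$ is coanalytic in $Y^\N$. It therefore remains to exhibit a continuous reduction of the $\mathbf{\Pi}^1_1$-complete set $\WF$ to $Y^{\uparrow\infty}$.

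For the reduction I would use $\Phi:\Tr\to Y^\N$, $\Phi(T)=(u_m^{\widehat T})_m$. Continuity of $\Phi$ follows from two facts: $T\mapsto\widehat T$ is continuous (Lemma~\ref{lem:fattening}), and $R\mapsto(u_m^R)_m$ is continuous into $Y^\N$ with the product topology (Lemma~\ref{lem:u-upper-continuous}). By Lemma~\ref{lem:u-upper-in-Y}, every $\Phi(T)$ lies in $Y^\uparrow$.

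The two directions of the equivalence
\[
T\in\WF\quad\equi\quad\Phi(T)\in Y^{\uparrow\infty}
\]
are then given by Propositions~\ref{prop:upper-WF} and~\ref{prop:upper-IF}. If $T\in\WF$, the increasing sequence $\Phi(T)$ is not order bounded. If $T\notin\WF$, the sequence $\Phi(T)$ is order bounded by $y^b$, so it lies outside $Y^{\uparrow\infty}$. Hence $\WF=\Phi^{-1}(Y^{\uparrow\infty})$, which gives $\WF\leq_{\rm c}Y^{\uparrow\infty}$. Since $\WF$ is $\mathbf{\Pi}^1_1$-complete \cite{Kechris}*{Section 27} and $Y^{\uparrow\infty}$ is $\mathbf{\Pi}^1_1$, the set $Y^{\uparrow\infty}$ is $\mathbf{\Pi}^1_1$-complete.

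There is no real obstacle here, since all the substantive work sits in the preceding lemmas. The one point I would check is that the continuity in Lemma~\ref{lem:u-upper-continuous} is coordinatewise continuity into $Y^\N$, which is exactly what is needed for a continuous reduction in the product topology.
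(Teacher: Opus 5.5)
Your proposal is correct and follows the paper's proof essentially verbatim. You take $Y=\{y\in Z_\R : y_\tom=0\}$, use Proposition~\ref{prop:coanalytic} for coanalyticity, and use the continuous map $\Phi(T)=(u_m^{\widehat T})_m$ together with Propositions~\ref{prop:upper-IF} and~\ref{prop:upper-WF} to get $\WF\leq_{\rm c}Y^{\uparrow\infty}$; you also spell out the separability and continuity details that the paper leaves implicit.
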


\begin{proof}
For the separable Banach lattice $Y=\{y\in Z_\R\del y_\tom=0\}$ constructed above, the continuous map $\Phi:\Tr\to Y^\N$ satisfies
\[
T\in\WF
\quad\equi\quad
\Phi(T)\in Y^{\uparrow\infty}
\]
by Propositions~\ref{prop:upper-IF} and~\ref{prop:upper-WF}. Hence, $\WF\leq_{\rm c}Y^{\uparrow\infty}$. Proposition~\ref{prop:coanalytic} gives coanalyticity, and $\mathbf{\Pi}^1_1$-completeness follows.
\end{proof}

\end{document}